\newcommand\abs[1]{\lvert #1\rvert}
\newtheorem{theorem}{Theorem}[section]
\newtheorem{lemma}[theorem]{Lemma}
\newtheorem*{LEMLARGEPATH}{Lemma \ref{lem:largepathsystem}}
\newtheorem*{LEMODDWALK}{Lemma \ref{lem:oddwalk2}}
\newtheorem{proposition}[theorem]{Proposition}
\newtheorem{claim}{Claim}
\newtheorem{question}{Question}
\newenvironment{pfofclaim}{\noindent \textsc{Proof of the Claim:}}{\hfill$\Diamond$\medskip}
\theoremstyle{definition}
\newcommand\cP{\mathcal{P}}
\newcommand\cQ{\mathcal{Q}}
\newcommand\cR{\mathcal{R}}
\newcommand\cU{\mathcal{U}}
\newcommand\cA{\mathcal{A}}
\newcommand\cB{\mathcal{B}}
\newcommand{\clos}{\operatorname{clos}}
\begin{document}
\title[A half-integral Erd\H{o}s-P\'osa theorem for directed odd cycles]{A half-integral Erd\H{o}s-P\'osa theorem for \\directed odd cycles}
\author{Ken-ichi Kawarabayashi}
\author{Stephan Kreutzer}
\author{O-joung Kwon}
\author{Qiqin Xie}
\address[Kawarabayashi]{National Institute of Informatics, 2-1-2, Hitotsubashi, Chiyoda-ku, Tokyo, Japan}
\address[Kreutzer]{Logic and Semantics, TU Berlin, Berlin, Germany}
\address[Kwon]{Department of Mathematics, Hanyang University, Seoul, South Korea}
\address[Kwon]{Discrete Mathematics Group, Institute~for~Basic~Science~(IBS), Daejeon,~South~Korea}
\address[Xie]{Department of Mathematics, College of Sciences, Shanghai University, 99 Shangda Rd., Shanghai, China, 200444}
\email{{k\_keniti@nii.ac.jp}}
\email{{stephan.kreutzer@tu-berlin.de}}
\email{{ojoungkwon@hanyang.ac.kr}}
\email{{qqxie@shu.edu.cn}}
\thanks{Corresponding author: O-joung Kwon. An extended abstract of this paper appeared in the proceedings of SODA2023~\cite{KawaKKX2023}.
The first author is supported by JSPS Kakenhi Grant Number JP18H0529 and 22H05001. The second author is supported by the European Research Council (ERC) under the
    European Union's Horizon 2020 research and innovation programme
    (ERC consolidator grant DISTRUCT, agreement No.\ 648527). The third author is supported by the National Research Foundation of Korea (NRF) grant funded by the Ministry of Education (No. NRF-2021K2A9A2A11101617 and No. RS-2023-00211670) and supported by the Institute for Basic Science (IBS-R029-C1). The fourth author is supported by National Natural Science Foundation of China (No. 12201390) and the National Key R\&D Program of China (No. 2022YFA1006400). Declarations of interest: none. }
\date{\today}
\begin{abstract}
We prove that there exists a function $f:\mathbb{N}\rightarrow \mathbb{R}$ such that every directed graph $G$ contains either $k$ directed odd cycles where every vertex of $G$ is contained in at most two of them, or a set of at most $f(k)$ vertices meeting all directed odd cycles. We give a polynomial-time algorithm for fixed $k$ which outputs one of the two outcomes. 
This extends the half-integral Erd\H{o}s-P\'osa theorem for undirected odd cycles by Reed [Combinatorica 1999] to directed graphs.

\end{abstract}
\keywords{Erd\H{o}s-P\'osa property, directed odd cycles}
\maketitle

\section{Introduction}
Erd\H{o}s and P\'osa~\cite{ErdosP1965} proved that for every undirected graph $G$ and every positive integer $k$, $G$ either contains  $k$ pairwise vertex-disjoint cycles or a set of $\mathcal{O}(k\log k)$ vertices that meet all cycles of~$G$. This result has been extended to cycles satisfying various constraints: long cycles~\cite{RobertsonS1986, BirmeleBR2007, FioriniH2014, MoussetNSW2016, BruhnJS2017}, cycles with modularity constraints~\cite{Thomassen1988, HuyneJW2017, BatenHJR2020}, cycles intersecting a prescribed vertex set~\cite{KakimuraKM2011, PontecorviW2012, BruhnJS2017, HuyneJW2017}, and holes~\cite{KimK2020}.  We refer to a survey of Raymond and Thilikos~\cite{RaymondT16} for more examples. 
 On the other hand, such a duality does not exist for odd cycles: Lov\'{a}sz and Schrijver~(see~\cite{Thomassen1988}) 
 found a class of graphs, called Escher walls, where they have no two vertex-disjoint odd cycles but there is no constant $c$ such that every Escher wall admits a set of $c$ vertices meeting all odd cycles. Escher walls are illlustrated in Figure~\ref{fig:escher}.

In 1999, Reed~\cite{Reed1999} obtained a half-integral analogue of the Erd\H{o}s-P\'osa theorem for odd cycles, by relaxing the vertex-disjoint packing to a half-integral packing. A family $\mathcal{C}$ of subgraphs in an undirected graph or a directed graph $G$ 
	is a \emph{half-integral packing} if every vertex of $G$ is contained in at most two subgraphs of $\mathcal{C}$.
This theorem of Reed has been recently generalized to group-labelled graphs by Huynh, Joos, and Wollan~\cite{HuyneJW2017}, Gollin et al.~\cite{GollinHKKO2021}, and Gollin et al.~\cite{GollinHKOY2022}.

\begin{theorem}[Reed~\cite{Reed1999}]\label{thm:Reed}
   There is a function $g\colon \mathbb{N}\to \mathbb{R}$ such that for every undirected graph $G$ and every positive integer $k$, $G$ contains a half-integral packing of $k$ odd cycles, or a set of at most $g(k)$ vertices meeting all odd cycles.
\end{theorem}

\begin{figure}[t]
  \centering
  \begin{tikzpicture}[scale=0.4]
  \tikzstyle{w}=[circle,draw,fill=black!50,inner sep=0pt,minimum width=4pt]

 \foreach \y in {0, 2, 4}{
	\draw (0, \y+1)--(0, \y+2);
	\draw (2, \y+1)--(2, \y+2);
	\draw (4, \y+1)--(4, \y+2);
	\draw (6, \y+1)--(6, \y+2);
	\draw (8, \y+1)--(8, \y+2);

	\draw (0, \y+1)--(8, \y+1);

	\draw (0, \y+2)--(1, \y+2);
	\draw (8, \y+1)--(9, \y+1);

	\draw (1, \y)--(1, \y+1);
	\draw (3, \y)--(3, \y+1);
	\draw (5, \y)--(5, \y+1);
	\draw (7, \y)--(7, \y+1);
	\draw (9, \y)--(9, \y+1);

	\draw (1, \y)--(9, \y);

}
	
	\draw (1, 6)--(8, 6);
	
	\draw[rounded corners, thick] (1, 6)-- (1, 6.5)--(10, 6.5)--(10, -.5)--(8, -.5)--(8, 0);
	\draw[rounded corners, thick] (3, 6)-- (3, 6.5+.5)--(10+.5, 6.5+.5)--(10+.5, -1)--(6, -.5-.5)--(6, 0);
	\draw[rounded corners, thick] (5, 6)-- (5, 6.5+.5+.5)--(10+.5+.5, 6.5+.5+.5)--(10+1, -1.5)--(4, -1.5)--(4, 0);
	\draw[rounded corners, thick] (7, 6)-- (7, 6.5+.5+.5+.5)--(10+.5+.5+.5, 6.5+.5+.5+.5)--(10+1.5, -2)--(2, -2)--(2, 0);
   \end{tikzpicture} \caption{An Escher wall, where the middle wall $W$ is bipartite and each thick path $P$ links from one vertex of the top row to the opposite vertex of the bottom row in the middle wall so that  the union of $W$ and $P$ has an odd cycle.}\label{fig:escher}
\end{figure}

\begin{figure}
  \centering
  \begin{tikzpicture}[scale=0.5, decoration={
    markings,
    mark=at position 0.5 with {\arrow{>}}}]
  \tikzstyle{w}=[circle,draw,fill=black!50,inner sep=0pt,minimum width=3pt]

 \foreach \y in {0, 1, 2, 3, 4, 5}{
	\draw[postaction={decorate}] (2, \y)-- (2, \y+1);
	\draw[postaction={decorate}] (4, \y)--(4, \y+1);
	\draw[postaction={decorate}] (6, \y)--(6, \y+1);
	\draw[postaction={decorate}] (8, \y)--(8, \y+1);

	\draw[postaction={decorate}] (1, \y+1) -- (1, \y);
	\draw[postaction={decorate}] (3, \y+1)--(3, \y);
	\draw[postaction={decorate}] (5, \y+1)--(5, \y);
	\draw[postaction={decorate}] (7, \y+1)--(7, \y);
	\draw[postaction={decorate}] (9, \y+1)--(9, \y);
	
	\foreach \x in {1, 2, 3, 4, 5, 6, 7, 8}{
		\draw[postaction={decorate}] (\x, \y)--(\x+1, \y);
	}
}
	\foreach \x in {1, 2, 3, 4, 5, 6, 7, 8}{
		\draw[postaction={decorate}] (\x, 6)--(\x+1, 6);
	}

	\draw[rounded corners, ->]   (9.5, 3) -- (10, 3)-- (10, -1.5) -- (5, -1.5);
	\draw[rounded corners, ->]   (5, -1.5) -- (0, -1.5)--(0, 3)--(.5, 3);

 \foreach \y in {0, 2, 4, 6}{
	\draw[thick, postaction={decorate}] (2+\y, 6)-- (2+\y,7);
	\draw[thick, postaction={decorate}] (2+\y,7)--(2.5+\y, 7.5);
	\draw[thick, postaction={decorate}] (2.5+\y, 7.5)--(3+\y, 7);
	\draw[thick, postaction={decorate}](3+\y, 7)--(3+\y,6);
	\draw (2+\y,7) node [w] {};
	\draw (2.5+\y,7.5) node [w] {};
	\draw (3+\y,7) node [w] {};
}

   \end{tikzpicture} \caption{A bipartite cylindrical grid with some parity-changing paths on the top. It is not difficult to see that there are no two vertex-disjoint directed odd cycles, but one can increase the minimum size of a hitting set by taking a larger construction. }\label{fig:circular}
\end{figure}

For directed graphs, the situations become much more complicated, and not many results are known. Reed, Robertson, Seymour, and Thomas~\cite{ReedRST1996} showed that an analogue of the Erd\H{o}s-P\'osa theorem holds for directed cycles, which confirms a long-standing conjecture of Younger \cite{younger}.  As an application of the directed grid theorem,  Kawarabayashi and Kreutzer~\cite{KawaK2015} proved that an analogue of the Erd\H{o}s-P\'osa theorem holds for directed cycles of length at least $\ell$ for some fixed $\ell$. 
Amiri et al.~\cite{AmiriKKW2016} further extended so that if $H$ is a strongly connected directed graph such that any $H$-subdivision can be obtained as a subgraph of some cylindrical wall (see Figure~\ref{fig:wall}), then an analogue of the Erd\H{o}s-P\'osa theorem holds for $H$-subdivisions. 
Kakimura and Kawarabayashi~\cite{DBLP:journals/siamdm/KakimuraK12} showed that an analogue of the Erd\H{o}s-P\'osa theorem does not hold for directed cycles meeting a prescribed set $S$ (so called directed $S$-cycles), but a $1/5$-integral analogue of the Erd\H{o}s-P\'osa theorem holds (this result was further improved to a half-integral analogue in \cite{soda13}). 
Masa\v{r}\'{\i}k et al.~\cite{MasarikMPRS2019} considered a half-integral packing of directed cycles, and proved that there is a set of $\mathcal{O}(k^6)$ vertices meeting all directed cycles if a directed graph has no half-integral packing of $k$ directed cycles. 
On the other hand, directed cycles with modularity constraints have not been considered much. Very recently, Gorsky, Kawarabayashi, Kreutzer, and Wiederrecht~\cite{GorskyKKW2024} proved that a $1/4$-integral analogue of the Erd\H{o}s-P\'osa theorem holds for directed even cycles.

The main contribution of this paper is to show that a half-integral analogue of the Erd\H{o}s-P\'osa theorem holds for directed odd cycles.
We construct an example, illustrated in Figure~\ref{fig:circular}, showing that an analogue of the Erd\H{o}s-P\'osa theorem does not hold for directed odd cycles even on planar directed graphs. 
This contrasts with the undirected case; it is known that an analogue of the Erd\H{o}s-P\'osa theorem holds for odd cycles on planar graphs~\cite{Reed1999, FioriniHRV2007, KralSS2012}.

\begin{theorem}\label{thm:main}
There is a function $f:\mathbb{N}\rightarrow \mathbb{R}$ such that for every directed graph $G$ and every positive integer $k$, 
$G$ contains a half-integral packing of $k$ directed odd cycles, or 
a set of at most $f(k)$ vertices meeting all directed odd cycles. For every fixed positive integer $k$, there is a polynomial-time algorithm that given a graph $G$, outputs one of the two outcomes.
\end{theorem}

The obtained function $f$ in Theorem~\ref{thm:main} relies on the function $f_{wall}$ for directed grid theorem~\cite{KawaK2015J} by Kawarabayashi and Kreutzer (see Theorem~\ref{thm:KK2}). 
It satisfies $f(1)=0$ and \[f(i)\le 96f_{wall}^2\left(2^{34}i^6+4i\cdot f(i-1)\right)\] for $i\ge 2$. 
The function $f_{wall}$ in~\cite{KawaK2015J} contains an exponential tower, and so does $f$. 
Apart from using the function for the directed grid theorem, the functions appearing in the other parts are elementary. 
We ask whether there is a polynomial function for Theorem~\ref{thm:main}. To achieve this, we either need to obtain a polynomial function for the directed grid theorem, or find another approach which avoids using the directed grid theorem. See Section~\ref{sec:discussion} for more discussion.

\medskip 
We sketch the proof of Theorem~\ref{thm:main}.

To obtain Erd\H{o}s-P\'osa type results for various graph families in the undirected setting, the grid minor theorem~\cite{RobertsonS1986} has been importantly used, see~\cite{RobertsonS1986, Thomassen1988, Reed1999, HuyneJW2017, GollinHKKO2021} for examples.  For directed graphs, 
Kawarabayashi and Kreutzer~\cite{KawaK2015} obtained the directed grid theorem, which shows that every directed graph of sufficiently large directed tree-width contains a cylindrical grid of large order as a butterfly minor. They observed that if a directed graph contains a cylindrical grid of large order as a butterfly minor, then it contains a cylindrical wall of large order as a subgraph. Similarly, we will use a cylindrical wall of large order, which is depicted in Figure~\ref{fig:wall}.

A set $S$ of vertices in a directed graph $G$ is a \emph{hitting set} for directed odd cycles if $S$ meets all directed odd cycles of $G$.
For a directed graph $G$, we denote by $\nu_2(G)$ the maximum size of a half-integral packing of directed odd cycles in $G$, and denote by $\tau(G)$ the minimum size of a hitting set for directed odd cycles in $G$.
     For each positive integer $k$, we define $\alpha_k$ as the minimum integer such that for every directed graph $G$ with $\nu_2(G)<k$, we have $\tau(G)\le \alpha_k$, if such an integer exists, and otherwise $\alpha_k$ is defined to be $\infty$.  It is sufficient to show that $\alpha_k\neq \infty$ for every positive integer $k$. Clearly, $\alpha_1=0$.
 We will prove it by induction on $k$.

    A set $T$ of vertices in a directed graph $G$ is an \emph{$r$-externally-well-linked set} 
	if for all disjoint sets $A$ and $B$ of vertices in $T$ with $\abs{A}=\abs{B}\ge r$, 
	there is a set of $\abs{A}$ vertex-disjoint paths from $A$ to $B$ in $G-(T\setminus (A\cup B))$ (and also from $B$ to $A$).
    We show in Lemma~\ref{lem:welllinked} that if $\alpha_{k-1}\neq \infty$ and a directed graph $G$ with $\nu_2(G)<k$ has a hitting set $T$ of directed odd cycles with $\abs{T}=\tau(G)$, then $T$ is $2\alpha_{k-1}$-externally-well-linked. 
    So, we can argue that if $\tau(G)$ is sufficiently large, then $G$ has large directed tree-width, and it contains a cylindrical wall of large order by the directed grid theorem. However, for our purpose, we need a special cylindrical wall of large order that cannot be separated from $T$ by removing a small set of vertices.

    Such a result was obtained in~\cite{KawaK2015J} (which is the journal version of \cite{KawaK2015}) for ordinary well-linked sets. 
    A set $X$ of vertices in a directed graph $G$ is a \emph{well-linked set} if for all sets $A$ and $B$ of vertices in $X$ with $\abs{A}=\abs{B}$,  there is a set of $\abs{A}$ vertex-disjoint paths from $A$ to $B$ in $G$ (and also from $B$ to $A$).
    Note that in some other papers, well-linked sets indicate externally-well-linked sets defined here.
    Kawarabayashi and Kreutzer~\cite[Theorem 7.1]{KawaK2015J} showed that 
    if $G$ contains a sufficiently large well-linked set $X$, then it contains a large cylindrical wall of order $w$, such that for every set $F$ of $w$ vertices that are out-degree $2$ or in-degree $2$ in the wall, there are $w$ vertex-disjoint paths from $F$ to $X$ in $G$ and from $X$ to $F$ in $G$.

    To relate the $2\alpha_{k-1}$-externally-well-linked set $T$ to some cylindrical wall, we prove the following lemma in Section~\ref{sec:welllinked} that there is a well-linked set $X$ such that $T$ and $X$ cannot be separated by removing a small set of vertices. It may be useful in the other context.
    \begin{lemma}\label{lem:largepathsystem}
	Let $r$ and $p$ be positive integers with $2p(p+1)\ge r$.
	If a directed graph $G$ contains an $r$-externally-well-linked set $T$ of size at least $12p(p+1)+1$, then 
	there exist a path $P$ in $G$ and $A\subseteq V(P)$ with $\abs{A}=p$ such that 
 \begin{itemize}
     \item $A$ is well-linked, and 
     \item for every subset $Z$ of $T$ of size at least $\abs{T}/2$, there is a set of $p$ vertex-disjoint paths from $A$ to $Z$, and there is a set of $p$ vertex-disjoint paths from $Z$ to $A$.
 \end{itemize} 
	\end{lemma}
    Combining with the directed grid theorem, 
    we obtain a required cylindrical wall $W$ of large order that is not separated from $T$ by removing a small set of vertices.

    We take $k$ vertex-disjoint subwalls of $W$ in a natural way, 
    and we may assume that one of them, say $W'$, has no directed odd cycles. As every wall is strongly connected, we can argue that the underlying undirected graph of $W'$ is bipartite. Let $N$ be a large set of vertices of $W'$ such that they have out-degree $2$ or in-degree $2$ in the wall, and they are in the same part of the bipartition of $W'$.
    We prove the following lemma in Section~\ref{sec:oddwalk}. A directed $X$-walk is a directed walk having at least one edge such that both endvertices are in $X$ and all its internal vertices are not in $X$, and 
    a directed $X$-path is a directed $X$-walk that is a directed path.

\begin{lemma}\label{lem:oddwalk2}
Let $k$ be a positive integer, let $G$ be a directed graph, and let $X\subseteq V(G)$. Then $G$ contains either
\begin{enumerate}
\item
a half-integral packing of $k$ odd cycles,
\item
a half-integral packing of $k$ odd $X$-paths whose endvertices are pairwise disjoint,  or
\item
a set $Y$ of at most $4k-1$ vertices such that $G-Y$ has no odd $X$-walk.
\end{enumerate}
\end{lemma}
    We apply this lemma to the set $X=N$ of $W'$ in $G$.

    In case when there is a small set $Y$ of vertices meeting all directed odd $N$-walks, 
    there is a strong component $H$ of $G-Y$ containing most of the vertices in $T$.
    We can argue that more than half of the columns of $W$ are also contained in~$H$.
    On the other hand, if $H$ has a directed odd cycle, then one can find a directed odd $N$-walk, which is a contradiction.
    So, $Y$ together with $T\setminus V(H)$ gives a hitting set for directed odd cycles, which is small.
    In the case when there are many directed odd $N$-paths,  
    we show in Section~\ref{sec:main} that we can use the bipartite cylindrical wall to find a half-integral packing of $k$ odd cycles, which contradicts the assumption that $\nu_2(G)<k$.  
    
    This part of finding a half-integral packing of $k$ directed odd cycles from many directed odd $N$-paths is technical. The odd $N$-paths may intersect with the other part of the bipartite wall $W'$. So, we first extract parity-breaking paths from given odd $N$-paths that intersect a small portion of the bipartite wall, and then find a large subwall $W''$ that is disjoint from selected parity-breaking paths. By connecting $W''$  and parity-breaking paths using paths in $W'$, we obtain a half-integral packing of directed paths that only attach to the boundary of $W''$. Then, using the internal part of $W''$, we complete each directed path to a directed odd cycle.

\section{Preliminaries}\label{sec:prelim}

Let $\mathbb{N}$ be the set of all positive integers, and $\mathbb{R}$ be the set of all reals.
For an integer $m$, we write $[m]$ for the set of positive integers at most $m$. In this paper, all directed graphs have no multiple edges and loops.
Directed walks, directed paths, and directed cycles are simply called walks, paths, and cycles respectively. 

Let $G$ be a directed graph. We denote by $V(G)$ and $E(G)$ the vertex set and the edge set of $G$, respectively.
If $(v,w)$ is an edge, then $v$ is its \emph{tail} and $w$ is its \emph{head}.
For a set $A$ of vertices in $G$, we denote by $G-A$ the directed graph obtained from $G$ by removing all the vertices in $A$, 
and denote by $G[A]$ the subgraph of $G$ induced by $A$.
For two directed graphs $G$ and $H$, let $G\cup H:=(V(G)\cup V(H), E(G)\cup E(H))$ and $G\cap H:=(V(G)\cap V(H), E(G)\cap E(H))$. For a set~$\mathcal{G}$ of directed graphs, we denote by~${\bigcup \mathcal{G}}$ the union of the directed graphs in~$\mathcal{G}$.  

We say that a directed graph $G$ is \emph{strongly connected} if for any two vertices $v$ and $w$ in $G$, there is a path from $v$ to $w$ in $G$ and there is a path from $w$ to $v$ in $G$.
A \emph{strong component} of $G$ is a maximal subgraph of $G$ that is strongly connected. It is well known that the set of strong components of $G$ can be labelled $G_1, G_2, \ldots, G_t$ such that there is no edge from $G_j$ to $G_i$ if $j> i$. Such an ordering is called an \emph{acyclic ordering} of the strong components of $G$.

For sets $A$ and $B$ of vertices in a directed graph $G$, a path is an \emph{$(A, B)$-path} 
if it starts at $A$ and ends at $B$, and all its internal vertices are not in $A\cup B$. 
For a set $A$ of vertices in $G$, an  \emph{$A$-walk} $P$ is a walk having at least one edge such that both endvertices of $P$ are in $A$ and all its internal vertices are not in $A$.
Note that the two endvertices of an $A$-walk may be the same vertex. An $A$-walk is \emph{closed} if its endvertices are the same. 
An $A$-walk is called an \emph{$A$-path} if it is a path. 

	Let $t$ be a positive integer. A family $(G_i:i\in [m])$ of subgraphs in a directed graph $G$ 
	is a \emph{$(1/t)$-integral packing} if every vertex of $G$ is contained in at most $t$  of $G_1, G_2, \ldots, G_m$. When $t=2$, we say that it is a {\em half-integral packing}.

\subsection{Cylindrical walls}

\begin{figure}
  \centering
  \begin{tikzpicture}[scale=0.6, decoration={
    markings,
    mark=at position 0.5 with {\arrow{>}}}]
  \tikzstyle{v}=[rectangle, draw, solid, fill=black, inner sep=0pt, minimum width=3pt, minimum height=3pt]
  \tikzstyle{w}=[circle, draw, solid, fill=black, inner sep=0pt, minimum width=3pt]

 \foreach \y in {1, 2, 3, 4, 5, 6, 7, 8}{
       \node at (8.6, 8-\y) {$P_{\y}$};
}

\foreach \x in {0, 1, 2, 3, 4, 5, 6, 7}{
    \foreach \y in {0, 1, 2, 3, 4, 5, 6, 7}{
          \draw (\x, \y) node [v] () {};
}
}

	\foreach \x in {0, 2, 4}{
		\foreach \y in {1,3,5,7}{
	\draw[postaction={decorate}] (\x, \y+1)-- (\x, \y);	
	\draw[postaction={decorate}] (\x+1, \y)-- (\x+1, \y-1);	
	
	\draw[postaction={decorate}] (\x, \y)-- (\x+1, \y);	
	\draw[postaction={decorate}] (\x+1, \y)-- (\x+2, \y);

    \draw[postaction={decorate}] (\x+1, \y-1)-- (\x, \y-1);	
    \draw[postaction={decorate}] (\x+2, \y-1)-- (\x+1, \y-1);	
	}
	}
	\foreach \x in {6}{
		\foreach \y in {1,3,5,7}{
		
	\draw[very thick, postaction={decorate}] (\x, \y+1)-- (\x, \y);	
	\draw[very thick, postaction={decorate}] (\x+1, \y)-- (\x+1, \y-1);	
	
	\draw[very thick, postaction={decorate}] (\x, \y)-- (\x+1, \y);	
			
	\draw[very thick, postaction={decorate}] (\x+1, \y-1)-- (\x, \y-1);	
	}
	}
    
	\foreach \x in {1,2,3}{
\draw[rounded corners, ->] (0+2*\x-2, 0)--(0+2*\x-2, -.5*\x)--(-1*\x, -.5*\x)--(-1*\x, 4);
\draw[rounded corners] (-1*\x, 4)--(-1*\x, 8+0.5*\x)--(0+2*\x-2, 8+0.5*\x)--(0+2*\x-2,8);
\node at (-1*\x-.5, 4) {$C_{\x}$};

}

	\foreach \x in {4}{
\draw[very thick, rounded corners, ->] (0+2*\x-2, 0)--(0+2*\x-2, -.5*\x)--(-1*\x, -.5*\x)--(-1*\x, 4);
\draw[very thick, rounded corners] (-1*\x, 4)--(-1*\x, 8+0.5*\x)--(0+2*\x-2, 8+0.5*\x)--(0+2*\x-2,8);
\node at (-1*\x-.5, 4) {$C_{\x}$};

}

   \end{tikzpicture} \caption{A cylindrical wall of order $4$. The cycle $C_4$ is depicted using thick edges. Rectangle vertices denote the nails of the wall.}\label{fig:wall}
\end{figure}

 For an integer $k\geq 2$, a \emph{cylindrical wall} $W$ of order $k$ is a
  directed graph consisting of $k$ pairwise vertex-disjoint cycles $C_1, \dots, C_k$, called \emph{columns}, and a set of $2k$ pairwise
  vertex-disjoint paths $P_1, \dots, P_{2k}$, called \emph{rows}, such that 
    \begin{itemize}
  \item for each $i\in [k]$ and $j\in [2k]$, $C_i\cap P_j$ is a path with at least one edge,
 \item the endvertices of $P_i$ are in $V(C_1)\cup V(C_k)$,
    \item the paths $P_1\cap C_i, \dots, P_{2k}\cap C_i$ appear in this order on each $C_i$ and
  \item for odd $i$, the paths $C_1\cap P_i, \dots, C_k\cap P_i$ appear
    in this order on $P_i$, and for even $i$, $C_k\cap P_i,
    \dots, C_1\cap P_i$ appear
    in this order on $P_i$.
  \end{itemize}
	See Figure~\ref{fig:wall} for an illustration of a cylindrical wall of order $4$.
	An endvertex of $C_i\cap P_j$ for some $i\in [k]$ and $j\in [2k]$ is called a \emph{nail}, and we denote by $N^W$ the set of all nails of $W$. Note that an $N^W$-path in $W$ is a path such that its endvertices are nails, but all the internal vertices are not nails. 
    Observe that $W$ contains exactly $4k^2$ nails.

We will use cylindrical walls that do not contain odd cycles.	
Because of the following fact, the underlying undirected graph of such a wall is bipartite. 

\begin{proposition}[Folklore]\label{prop:bipartite}
Let $D$ be a strongly connected directed graph having no
odd cycle. Then, the underlying undirected graph of $D$ is bipartite.
\end{proposition}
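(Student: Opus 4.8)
The plan is to exhibit a $2$-coloring of $V(D)$ directly and then verify, using strong connectivity together with the hypothesis that $D$ has no directed odd cycle, that it is a proper $2$-coloring of the underlying undirected graph. Fix a vertex $r \in V(D)$. For each $v \in V(D)$, strong connectivity gives a directed walk from $r$ to $v$; define $\phi(v) \in \{0,1\}$ to be the parity of the length of some such walk. The first step is to show $\phi$ is well-defined, i.e.\ independent of the chosen walk. Suppose $P$ and $P'$ are two directed walks from $r$ to $v$ of lengths of different parity. Using strong connectivity again, pick a directed walk $Q$ from $v$ back to $r$. Then $P$ followed by $Q$ and $P'$ followed by $Q$ are two closed directed walks through $r$ whose lengths differ in parity, so exactly one of them is a closed directed walk of odd length.

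The key step is then the standard fact that a digraph containing a closed directed walk of odd length must contain a directed odd cycle. I would prove this by induction on the length of the closed walk: a shortest closed directed walk of odd length cannot repeat a vertex, since splitting at a repeated vertex yields two shorter closed directed walks, at least one of which has odd length, contradicting minimality; hence it is a directed odd cycle. This contradicts the hypothesis on $D$, so $\phi$ is well-defined.

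Finally I would check that $\phi$ properly $2$-colors the underlying undirected graph: if $(u,w) \in E(D)$, then appending this edge to a directed walk from $r$ to $u$ gives a directed walk from $r$ to $w$ one edge longer, so $\phi(w) \neq \phi(u)$; thus the color classes $\phi^{-1}(0)$ and $\phi^{-1}(1)$ witness bipartiteness. The only genuine obstacle is the closed-walk-to-cycle reduction in the previous paragraph, and even that is routine; the rest is bookkeeping with parities of walk lengths. (One should note $r$ reaches every vertex and is reached from every vertex precisely because $D$ is strongly connected, which is where that hypothesis is used.)
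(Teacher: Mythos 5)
Your proof is correct. The paper states this proposition as folklore and gives no proof of its own, so there is nothing to compare against; your argument (parity $2$-coloring from a fixed root, well-definedness via the reduction of an odd closed directed walk to a directed odd cycle, then properness along each arc) is exactly the standard one, and the key closed-walk-to-cycle step you isolate is the same fact the authors invoke without proof inside the proof of Lemma~\ref{lem:oddwalk2}. One tiny point worth making explicit when you split a shortest odd closed walk at a repeated vertex: you must take a non-trivial repetition (not the coinciding first and last occurrence), so that both resulting closed walks have positive length strictly less than the original; with that said, the induction goes through.
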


	We say that a cylindrical wall is \emph{bipartite} if its underlying undirected graph is bipartite.

	\subsection{Linkages and separations}
	For a positive integer $t$ and sets $A$ and $B$ of vertices in $G$, 
	a family $(P_i:i\in [m])$ of $(A, B)$-paths in $G$ is a \emph{$(1/t)$-integral linkage of order $m$ from $A$ to $B$}
	if it is a  $(1/t)$-integral packing. When $t=1$, we simply call it a linkage.
	A \emph{separation} of a directed graph $G$ is an ordered pair $(A, B)$ of sets of vertices in $G$ such that
	$A\cup B=V(G)$ and there are
  no edges from $A\setminus B$ to $B\setminus A$.
  The \emph{order} of the separation $(A, B)$ is $\abs{A\cap B}$.

  \begin{theorem}[Menger's theorem~\cite{Menger27}]\label{thm:menger}  
  Let $A$ and $B$ be sets of vertices in a directed graph $G$, and let $k$ be a positive integer.
  Then $G$ contains either a linkage of order $k$ from $A$ to $B$, 
  or a separation $(X,Y)$ of order less than $k$ such that $A\subseteq X$ and $B\subseteq Y$.
  \end{theorem}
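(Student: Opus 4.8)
The plan is to deduce this directed, vertex-capacitated, ``local'' form of Menger's theorem from the edge version, i.e.\ from the max-flow min-cut theorem, via the classical vertex-splitting gadget. Concretely, I would build an auxiliary digraph $G'$ as follows: replace each vertex $v\in V(G)$ by two vertices $v^-,v^+$ joined by an arc $v^-v^+$ of capacity $1$; for each arc $(u,v)\in E(G)$ with $v\notin A$ and $u\notin B$, add an arc $u^+v^-$ of capacity $\infty$; add a new source $s$ with an $\infty$-capacity arc $s\to a^-$ for every $a\in A$, and a new sink $t$ with an $\infty$-capacity arc $b^+\to t$ for every $b\in B$. The unit capacities on the arcs $v^-v^+$ encode the constraint ``each vertex of $G$ is used at most once'', while discarding the arcs into some $a^-$ ($a\in A$) and out of some $b^+$ ($b\in B$) ensures that, after flow decomposition, every $s$-$t$ path of $G'$ projects to a genuine directed $(A,B)$-path of $G$ whose internal vertices avoid $A\cup B$ (edges of $G$ within $A$, within $B$, or from $B$ to $A$ are never used by such a path, so discarding them is harmless).

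With this gadget in hand, I would apply the max-flow min-cut theorem to $s$ and $t$ in $G'$. If the maximum flow value is at least $k$, an integral maximum flow exists; decomposing it into $s$-$t$ paths and projecting back to $G$ yields $k$ directed $(A,B)$-paths that are pairwise vertex-disjoint, since the unit capacities forbid two of them from sharing a vertex of $G$, i.e.\ a linkage of order $k$. Otherwise the minimum $s$-$t$ cut has value less than $k$; as every arc except the $v^-v^+$ ones has capacity $\infty$, a minimum cut $S$ consists solely of arcs $v^-v^+$, so $Z:=\{v : v^-v^+\in S\}$ has $\abs{Z}<k$. Taking $X:=Z\cup\{v\in V(G) : v^+ \text{ is reachable from } s \text{ in } G'-S\}$ and $Y:=(V(G)\setminus X)\cup Z$, one checks that $(X,Y)$ is a separation of $G$ with $X\cap Y=Z$, hence of order $\abs{Z}<k$, and that $A\subseteq X$ and $B\subseteq Y$ (if some $b\in B\setminus Z$ lay in $X$, then $s\to\dots\to b^+\to t$ would survive in $G'-S$, contradicting that $S$ is a cut). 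Conversely, any separation of $G$ of order $<k$ with $A\subseteq X$, $B\subseteq Y$ induces an $s$-$t$ cut of $G'$ of value $<k$, so the two alternatives are exhaustive.

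Alternatively, to avoid invoking max-flow min-cut as a black box, the same statement follows directly by the augmenting-path method adapted to vertex capacities: starting from the empty linkage, one repeatedly searches, in a suitable residual digraph in which arcs already traversed may be used backward and each vertex may be entered and left at most once, for an augmenting walk from $A$ to $B$; each successful search increases the linkage order by one, and as soon as no augmenting walk exists, the set of vertices reachable from $A$ in the residual digraph yields a separation whose order equals the current linkage order. Iterating until the linkage has order $k$, or no augmenting walk remains, produces the required dichotomy.

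I do not expect any genuine obstacle here --- this is a classical theorem --- so the only real work is bookkeeping: verifying that the vertex-splitting gadget faithfully translates vertex-disjoint $(A,B)$-paths of $G$ into $s$-$t$ flows of $G'$; checking that every finite $s$-$t$ cut of $G'$ is ``vertex-like'' and therefore comes from an honest separation of $G$; handling degenerate cases such as $v\in A\cap B$ (which corresponds to the single arc $v^-v^+$ being a valid $s$-$t$ path, i.e.\ to the one-vertex path at $v$); and confirming that the projected paths are $(A,B)$-paths in the precise sense of the preliminaries, with all internal vertices outside $A\cup B$. These checks are routine, which is presumably why the statement is recorded here without proof.
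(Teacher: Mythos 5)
The paper records Menger's theorem as a classical fact and gives no proof of its own, so there is nothing internal to compare your argument against. Your derivation is the standard textbook proof of the vertex form of directed Menger via the vertex-splitting gadget and max-flow min-cut (or, equivalently, augmenting paths), and it is correct: the checks you defer as routine do go through --- every finite $s$-$t$ cut of $G'$ consists only of arcs $v^-v^+$ and hence projects to a separation $(X,Y)$ with $X\cap Y=Z$ and $|Z|<k$ in the sense defined in the preliminaries (no edges from $X\setminus Y$ to $Y\setminus X$, with $A\subseteq X$ and $B\subseteq Y$); and the internal vertices of the projected paths avoid $A\cup B$ precisely because you discarded arcs into $a^-$ for $a\in A$ and out of $b^+$ for $b\in B$. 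The only cosmetic slip is the last sentence of your second paragraph: the converse implication (a separation induces a cut) gives mutual exclusivity of the two outcomes, while exhaustiveness is already supplied by max-flow min-cut itself; this does not affect the validity of the proof.
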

		
	 We will use the following observation.
	\begin{lemma}\label{lem:mintegral}
	Let $t$ and $m$ be positive integers, and let $A$ and $B$ be sets of vertices in a directed graph $G$.
	If there is a $(1/t)$-integral linkage $\cP_1$ of order $m$ from $A$ to $B$, 
	then there is a linkage $\cP_2$ of order at least $m/t$ from $A$ to $B$ such that $\bigcup \cP_2$ is a subgraph of $\bigcup \cP_1$.
	\end{lemma}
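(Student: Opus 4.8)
The plan is to extract an ordinary linkage from the $(1/t)$-integral linkage $\cP_1$ by a greedy selection argument, discarding paths that overlap too much with the ones already chosen. Write $\cP_1=\{P_1,\dots,P_{mt}\}$. I would process the paths one at a time, maintaining a set $\cQ$ of pairwise vertex-disjoint $(A,B)$-paths already selected from $\cP_1$, and at each step either add the current path $P_i$ to $\cQ$ if it is vertex-disjoint from every path currently in $\cQ$, or skip it. The key quantitative point is to bound how many paths can be skipped: each skipped path must share a vertex with some path already in $\cQ$, and I want to argue that a single path $Q\in\cQ$ can only ``block'' a bounded number of paths of $\cP_1$.

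The main step is the counting estimate. Fix a path $Q$ that gets selected into $\cQ$. Every vertex $v\in V(Q)$ lies in at most $t$ of the paths $P_1,\dots,P_{mt}$, one of which is $Q$ itself; so $v$ lies in at most $t-1$ other paths of $\cP_1$. However this naive count is multiplied over all of $V(Q)$, which is too weak. Instead I would argue more carefully at the moment $Q$ is selected: consider the paths of $\cP_1$ that are skipped \emph{because} of $Q$, i.e.\ that intersect $Q$ but no earlier-selected path. Rather than trying to make the naive bound work, the cleaner route is to count globally. Each vertex of $G$ is used at most $t$ times across $\cP_1$, so $\sum_{i=1}^{mt}\abs{V(P_i)} \le t\cdot\abs{\bigcup_i V(P_i)}$, and in particular the total ``load'' contributed to $V(\cQ)$ is bounded; more usefully, since each vertex of $V(\cQ)$ is in at most $t$ paths of $\cP_1$ and the paths in $\cQ$ are disjoint, every path $P_i\notin\cQ$ that is discarded contributes at least one vertex to $V(\cQ)$, and that vertex is charged to at most $t-1$ discarded paths (the $t$-th slot being taken by the path of $\cQ$ it lies on). Hence the number of discarded paths is at most $(t-1)\abs{V(\cQ)}/1$ is again not obviously bounded.

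The right way, which I expect actually works, is to do the greedy selection but bound the number of discards by $t-1$ per selected path via a charging on \emph{intersections} localized to $\cQ$ one path at a time: when $P_i$ is discarded it meets some $Q_j\in\cQ$ at a vertex $v$, and $v$ lies in $Q_j$ plus at most $t-1$ further members of $\cP_1$; so $Q_j$ together with any one of its vertices accounts for at most $t-1$ discards \emph{through that vertex}, but a discarded $P_i$ may be counted at several vertices. To avoid double counting, assign to each discarded $P_i$ the first (in the order $Q_1,Q_2,\dots$) selected path it meets and the first vertex of that path it meets; distinct discarded paths sharing the same $(Q_j,v)$ are then distinct members of the $\le t-1$ non-$Q_j$ paths through $v$, so they number at most $t-1$ per pair $(Q_j,v)$, and summing over $v\in V(Q_j)$ is still unbounded. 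The clean fix: it suffices to observe that each vertex of $G$ is charged at most $t-1$ times total (across all discarded paths, since the assigned vertex lies in the assigned path of $\cQ$ and in the discarded path, using up $2$ of its $\le t$ slots and leaving $\le t-2$, but more simply: the discarded paths assigned to a fixed vertex $v$ are all distinct members of $\cP_1$ through $v$, hence at most $t$, minus the one in $\cQ$, so at most $t-1$), and therefore $\abs{\cP_1\setminus\cQ}\le (t-1)\cdot(\text{number of charged vertices})$ — and a charged vertex is a vertex of some path in $\cQ$, but that is still not a fixed bound, which tells me the greedy-on-all-paths argument needs one more idea. I expect the actual argument in the paper replaces ``greedy'' with the observation that it is enough to find \emph{one} sub-linkage of order $m$, so one should instead take, among the $\binom{mt}{m}$ sub-families of size $m$, whichever minimizes total length, and show a minimum-length such family must be vertex-disjoint, or alternatively route via Menger: the $(1/t)$-integral linkage certifies that no separation $(X,Y)$ with $A\subseteq X$, $B\subseteq Y$ of order less than $m$ exists, because such a separator $S=X\cap Y$ would meet every $P_i$, giving $mt\le t\abs{S}$, so $\abs{S}\ge m$, contradicting $\abs{S}<m$; then Theorem~\ref{thm:menger} yields a genuine linkage $\cP_2$ of order $m$ from $A$ to $B$. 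The one remaining point is to arrange $\cP_2\subseteq\cP_1$, which one gets by applying Menger's theorem inside the subgraph $\cP_1$ itself (viewed as a digraph) rather than in $G$ — there the same separator-counting shows minimum cut $\ge m$.

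The main obstacle is precisely this last subtlety: making the output linkage a subgraph of $\cP_1$. Running Menger's theorem in the auxiliary digraph whose vertex and edge sets are $V(\cP_1)$ and $E(\cP_1)$ handles it, provided one checks that $A\cap V(\cP_1)$ and $B\cap V(\cP_1)$ still have no separator of order $<m$ there, which follows from the same averaging argument ($mt$ internally-disjoint — as members of a $(1/t)$-integral family — paths each hitting any separator, each vertex hit at most $t$ times). So the proof is: (i) in the digraph $\cP_1$, suppose for contradiction there is a separation of order $<m$ between $A$ and $B$; (ii) its separator meets all $mt$ paths of $\cP_1$ but absorbs at most $t$ per vertex, forcing size $\ge m$, a contradiction; (iii) apply Theorem~\ref{thm:menger} in $\cP_1$ to obtain the desired linkage $\cP_2$ of order $m$, which is automatically a subgraph of $\cP_1$. $\hfill\qed$
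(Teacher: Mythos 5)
Your final argument (steps (i)--(iii)) is correct and is essentially the paper's proof: restrict to the digraph $\cP_1$, observe that any separator between $A$ and $B$ there must meet all $mt$ paths while each vertex lies on at most $t$ of them, so it has size at least $m$, and then apply Menger's theorem inside $\cP_1$ so that the resulting linkage is automatically a subgraph of $\cP_1$. The paper packages the same counting via a $t$-fold vertex blow-up $G^t$, but that is only a formalization of your averaging step; the earlier greedy explorations in your write-up are unnecessary and can be deleted.
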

	\begin{proof}
	We may assume that $G=\bigcup \cP_1$.
	Suppose that there is no linkage of order at least $m/t$ from $A$ to $B$ in $G$. Then by Menger's theorem, 
	there is a separation $(C, D)$ of order less than $m/t$ in $G$ such that $A\subseteq C$ and $B\subseteq D$.
	Now, since $\mathcal{P}_1$ is $(1/t)$-integral, 
	each vertex of $C\cap D$ is contained in at most $t$ paths of $\mathcal{P}_1$. Since every path in $\mathcal{P}_1$ contains a vertex of $C\cap D$, the order of $\mathcal{P}_1$ is at most $(\lceil m/t\rceil-1)t$, which is less than $m$. This contradicts the assumption that $\mathcal{P}_1$ has order $m$.
	\end{proof}

	\subsection{Well-linked sets}
	We will discuss two versions of well-linked sets.
	A set $T$ of vertices in a directed graph $G$ is a \emph{well-linked set} if for all sets $A$ and $B$ of vertices in $T$ with $\abs{A}=\abs{B}$,
	there is a linkage of order $\abs{A}$ from $A$ to $B$ in $G$
	and there is a linkage of order $\abs{A}$ from $B$ to $A$ in $G$. It is known that a directed graph has a large well-linked set if and only if it has large directed tree-width. We refer to Section 9.3 in~\cite{digraphbook}.

We will use the following version of the directed grid theorem.

\begin{theorem}[Kawarabayashi and Kreutzer, Theorem 7.1 of \cite{KawaK2015J}]\label{thm:KK2}
There is a function $f_{wall}:\mathbb{N}\to \mathbb{R}$ such that for every positive integer $w$ and every directed graph $G$, if $G$ contains a well-linked set $A$ of order $f_{wall}(w)$, then it contains a cylindrical wall $W$ of order $w$, such that for every set $F$ of $w$ nails of $W$, there are $w$ vertex-disjoint paths from $F$ to $A$ in $G$ and there are $w$ vertex-disjoint paths from $A$ to $F$ in $G$.
\end{theorem}

	A set $T$ of vertices in a directed graph $G$ is an \emph{$r$-externally-well-linked set} 
	if for all disjoint sets $A$ and $B$ of vertices in $T$ with $\abs{A}=\abs{B}\ge r$, 
	there is a linkage of order $\abs{A}$ from $A$ to $B$ in $G-(T\setminus (A\cup B))$
	and there is a linkage of order $\abs{A}$ from $B$ to $A$ in $G-(T\setminus (A\cup B))$.
    This concept naturally appears in the Erd\H{o}s-P\'osa type results, see~\cite{ReedRST1996} for instance.

    For a positive integer $q$, a set $S$ of vertices in a directed graph $G$ is \emph{$q$-linked} if for every set $X\subseteq V(G)$ with $\abs{X}<q$, there is a unique strong component of $G-X$ that contains more than half of the vertices in $S$.

	We use the following relation between $r$-externally-well-linked sets and $q$-linked sets.
	\begin{lemma}\label{lem:wls}
	Let $q$ and $r$ be positive integers with $q\ge r$.
	Every $r$-externally-well-linked set of order at least $6q-4$ is $q$-linked.	
	\end{lemma}
	\begin{proof}
	Let $T$ be an $r$-externally-well-linked set of size at least $6q-4$. To show that $T$ is $q$-linked, we choose a set $X$ of less than $q$ vertices.
	Let $H_1, H_2, \ldots, H_m$ be the set of all strong components of $G-X$, and assume that it is ordered in an acyclic ordering. Suppose for contradiction that there is no strong component of $G-X$ containing more than half of the vertices in $T$. 
	
	We choose a minimum integer $j$ such that 
	$\bigcup_{i\in [j]}V(H_i)$ contains at least $q$ vertices of $T$. As every strong component of $G-X$ has at most $\abs{T}/2$ vertices of $T$, 
	$\bigcup_{i\in [j]}V(H_i)$ contains at most $(q-1)+\abs{T}/2$ vertices of $T$.
	Thus, 
	$\bigcup_{i\in [m]\setminus [j]}V(H_i)$ contains at least \[\abs{T}-(q-1)-\left(q-1+\frac{\abs{T}}{2}\right)=\frac{\abs{T}}{2}-2(q-1)\ge q\] vertices of $T$.
	It implies that there is a linkage of order $q$ from $T\cap (\bigcup_{i\in [m]\setminus [j]}V(H_i))$ to 
	$T\cap (\bigcup_{i\in [j]}V(H_i))$.
	But all these $q$ paths have to contain a vertex of $X$, which is not possible.
	
	We conclude that $T$ is $q$-linked.
\end{proof}

\section{Lemmas on odd $X$-walks}\label{sec:oddwalk}

In this section, we prove the following lemma, which will be used in the proof of Theorem~\ref{thm:main}.
\begin{LEMODDWALK}
Let $k$ be a positive integer, let $G$ be a directed graph, and let $X\subseteq V(G)$. Then $G$ contains either
\begin{enumerate}
\item
a half-integral packing of $k$ odd cycles,
\item
a half-integral packing of $k$ odd $X$-paths whose endvertices are pairwise disjoint,  or
\item
a set $Y$ of at most $4k-1$ vertices such that $G-Y$ has no odd $X$-walk.
\end{enumerate}
\end{LEMODDWALK}

As a first step, we prove the following.

\begin{lemma}\label{lem:oddwalk1}
Let $\ell$ be a positive integer, let $G$ be a directed graph, and let $X\subseteq V(G)$. Then $G$ contains either
\begin{enumerate}
\item
a set of $\ell$ odd $X$-walks such that every vertex of $G$ is used in at most two of them including the number of repetitions in each walk,  or
\item
a set $Y$ of at most $\ell-1$ vertices such that $G-Y$ has no odd $X$-walk.
\end{enumerate}
\end{lemma}
\begin{proof} 
We obtain a new directed graph from $G$ by splitting each vertex $v$ into two vertices $v_1$ and $v_2$, 
and adding edges $(v_1, w_2), (v_2, w_1)$  if $(v,w)$ is an edge of $G$.
Formally, let $D$ be the bipartite directed graph with bipartition $(A, B)$ such that 
\begin{itemize}
    \item $A=\{v_1: v\in V(G)\}$ and $B=\{v_2: v\in V(G)\}$, and
    \item $E(D)=\{(v_1, w_2), (v_2, w_1):(v,w)\in E(G)\}$.
\end{itemize}
Let $X_A:=\{v_1:v\in X\}$ and $X_B:=\{v_2:v\in X\}$. For a vertex $v_i\in V(D)$, we say that $v$ is the \emph{original vertex} of $v_i$.

Observe that $X_A$ and $X_B$ lie in distinct parts of $D$, and therefore, any path from $X_A$ to $X_B$ in $D$ has odd length.

Assume that there is a family $\cQ$ of $\ell$ vertex-disjoint paths from $X_A$ to $X_B$ in $D$. We obtain from each path $Q\in \cQ$, a walk $Q^*$ in $G$ by taking the sequence of corresponding original vertices. Then $(Q^*:Q\in \cQ)$ is a family of $\ell$ odd $X$-walks in $G$ such that every vertex of $G$ is used in at most two of them including the number of repetitions in each walk.
In this case, we get the first conclusion.
Otherwise, by Menger's theorem, there is a separation $(S, T)$ in $D$ of order at most $\ell-1$ such that
$X_A\subseteq S$ and $X_B\subseteq T$. Let $Y$ be the set of all vertices $v$ in $G$ for which $v_1$ or $v_2$ is in $S\cap T$. Then $\abs{Y}\le \ell-1$. Let $Y':=\{v_1, v_2:v\in Y\}$. Clearly, $S\cap T\subseteq Y'$.

We claim that $G-Y$ has no odd $X$-walk. Assume there is an odd $X$-walk $(q_1, q_2, \ldots, q_m)$ in $G-Y$. Then $((q_1)_1, (q_2)_2, (q_3)_1, \ldots, (q_m)_2)$ is a walk in $D-Y'$ from $X_A$ to $X_B$. Thus, there is an $(X_A, X_B)$-path in $D-Y'$. It is a contradiction, as $D-Y'$ is a subgraph of $D-(S\cap T)$.
We conclude that $G-Y$ has no odd $X$-walk. 
\end{proof}

Now, we prove Lemma~\ref{lem:oddwalk2}.

\begin{proof}[Proof of Lemma~\ref{lem:oddwalk2}]
We apply Lemma~\ref{lem:oddwalk1} to $G$ and $X$ with $\ell=4k$. If $G$ contains a set of at most $4k-1$ vertices hitting all odd $X$-walks, then we are done.
Thus, we may assume that there are $4k$ odd $X$-walks such that every vertex of $G$ is used at most twice, including the number of repetitions in each walk. 
If there are $k$ odd $X$-walks such that each of them contains an odd cycle, then we get a half-integral packing of $k$ odd cycles.
So we may assume that there is a set $\cQ$ of at least $3k$ odd $X$-walks containing no odd cycles. 

We verify that every closed odd walk contains an odd cycle. Let $Q$ be a closed odd walk, and let $Q'=(q_1, q_2, \ldots, q_m)$ be a shortest closed odd walk in $Q$ with $q_1=q_m$. If there are no repeated vertices except endvertices, then $Q'$ is an odd cycle. Assume that there is a pair of repeated vertices. We choose such a pair $(q_i, q_j)$ with $\abs{j-i}$ being minimum. If the length from $q_i$ to $q_j$ is odd, then $Q'$ contains an odd cycle. Otherwise, it has even length, and by removing this part, we can find a shorter closed odd walk, a contradiction.
It implies that each walk in $\cQ$ is not closed.

Let $W\in \cQ$, and 
let $W'=(w_1, w_2, \ldots, w_t)$ be a shortest odd walk in $W$ where $W$ and $W'$ have the same endvertices. We claim that $W'$ is an odd $X$-path.
If $W'$ has no repeated vertices, then $W'$ is an odd $X$-path. Assume that there is a pair of repeated vertices. We choose such a pair $(w_i, w_j)$ with $\abs{j-i}$ being minimum. If the length from $w_i$ to $w_j$ is odd, then $W'$ contains an odd cycle, a contradiction. Otherwise, it has even length, and by removing this part, we can find a shorter odd walk with the same endvertices.
It contradicts the minimality of $W'$.
As the endvertices of $W'$ are distinct, we deduce that $W'$ is an odd $X$-path.

So, $G$ contains $3k$ odd $X$-paths such that each vertex of $G$ is used in at most two of them. By greedily choosing one $X$-path and removing two possible $X$-paths sharing an endvertex with it, we can find $k$ of them that have pairwise disjoint endvertices.
\end{proof}

\section{Well-linked sets and $r$-externally-well-linked sets}\label{sec:welllinked}

In this section, we construct a useful structure from a large $r$-externally-well-linked set.
  A \emph{bramble} in a directed graph $G$ is a set $\cB$ of strongly connected subgraphs of $G$ such that 
    for all $B_1, B_2\in \cB$, $V(B_1)\cap V(B_2)\neq \emptyset$. 
    A \emph{cover} of $\cB$ is a set $X$ of vertices in $G$ such that $V(B)\cap X\neq \emptyset$ for all $B\in \cB$. The \emph{order} of $\cB$ is the minimum size of a cover of $\cB$. 

    Note that Reed~\cite{Reed99} originally defined (directed) brambles as sets $\cB$ of strongly connected subgraphs of $G$ such that  for all $B_1, B_2\in \cB$, \begin{itemize}
        \item $V(B_1)\cap V(B_2)\neq \emptyset$ or
        \item there are an edge from $B_1$ to $B_2$ and an edge from $B_2$ to $B_1$.
    \end{itemize}
    We define the order of this bramble in the same way.  
    In Section 9.3 of~\cite{digraphbook}, the authors compared these two concepts.   
    To compare them, we say that the former is a bramble of the first type, and the latter is a bramble of the second type. They argued that for $k\ge 1$, if a directed graph has a bramble of the first type of order $k$, then it has a bramble of the second type of order at least $k$, and 
    if a directed graph has a bramble of the second type of order $4k-3$, then it has a bramble of the first type of order at least $k$. We will only consider the bramble of the first type.
    
    We use the following lemma. 
    \begin{lemma}[Lemma 4.3 of \cite{KawaK2015}]\label{lem:intersecting}
	Let $G$ be a directed graph and $\cB$ be a bramble of $G$. 
	Then there is a path $P$ intersecting every set in $\cB$.
	\end{lemma}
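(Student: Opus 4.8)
The plan is to pick an extremal directed path and argue that it must already hit every element of the bramble. First I would choose a directed path $P$ in $G$ that meets (i.e.\ shares a vertex with) as many of the subgraphs in $\cB$ as possible, and, among all such paths, one with the fewest vertices. Write $\cB_P$ for the set of $B\in\cB$ with $V(B)\cap V(P)\neq\emptyset$. If $\cB=\emptyset$ there is nothing to prove, and otherwise $\cB_P\neq\emptyset$, since every $B\in\cB$ is a nonempty strongly connected subgraph and so a one-vertex path inside $B$ already meets $B$; thus the maximum is over a nonempty collection and is attained ($G$, hence $\cB$, being finite). The goal is to show $\cB_P=\cB$.

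Next I would use the minimality of $|V(P)|$ to pin down a convenient bramble element at the start of $P$: writing $a$ for the first vertex of $P$, there is some $B_a\in\cB_P$ with $V(B_a)\cap V(P)=\{a\}$. Indeed, otherwise every element of $\cB_P$ is met by $P$ at a vertex other than $a$, so the strictly shorter directed path $P-a$ still meets all of $\cB_P$, contradicting the choice of $P$; and if $P$ is a single vertex, any element of $\cB_P$ serves as $B_a$.

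The heart of the argument is then a swap. Suppose for contradiction that some $B_0\in\cB\setminus\cB_P$ exists, so $V(B_0)\cap V(P)=\emptyset$. Apply the defining property of a bramble to the pair $\{B_0,B_a\}$: either $V(B_0)\cap V(B_a)\neq\emptyset$, or there is an edge from $B_0$ to $B_a$. In the first case, take $z\in V(B_0)\cap V(B_a)$ and a directed path $Q$ from $z$ to $a$ inside the strongly connected $B_a$; since $V(Q)\subseteq V(B_a)$ meets $V(P)$ only in $a$, prepending $Q$ to $P$ yields a genuine directed path. In the second case, let $(y',z')$ be an edge with $y'\in V(B_0)$ and $z'\in V(B_a)$; then $y'\notin V(P)$ (otherwise $P$ would meet $B_0$) and $y'\notin V(B_a)$ (since $V(B_0)\cap V(B_a)=\emptyset$ in this case), so taking a directed path $Q$ from $z'$ to $a$ inside $B_a$, the directed walk consisting of $y'$, the edge $(y',z')$, then $Q$, then $P$ has no repeated vertex and is again a directed path. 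In either case the resulting directed path contains all of $V(P)$ together with a vertex of $B_0$, hence meets every element of $\cB_P$ and in addition $B_0$, i.e.\ strictly more subgraphs of $\cB$ than $P$ does. This contradicts the maximality in the choice of $P$, so $\cB_P=\cB$ and $P$ is the desired path.

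The main obstacle -- essentially the only point needing care -- is making sure the modified object is a bona fide directed path rather than a mere walk; this is exactly what the minimum-vertex clause buys, since it forces $B_a$ to touch $P$ in the single vertex $a$, so the detour through $B_a$ glues cleanly onto the front of $P$ without creating a repeat. The degenerate cases (an empty bramble, or $P$ a single vertex) are trivial to dispose of separately, and I do not expect any other step to be difficult.
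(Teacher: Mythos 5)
Your argument is correct. Note that the paper itself does not prove this statement; it is quoted as Lemma~4.3 of the directed grid theorem paper of Kawarabayashi and Kreutzer, so there is no in-paper proof to compare against. Your extremal argument -- take a directed path meeting a maximum number of bramble elements and, subject to that, with fewest vertices, so that some $B_a\in\cB_P$ meets $P$ exactly in its first vertex $a$, and then prepend a detour through $B_a$ to absorb any missed element $B_0$ -- is a clean, self-contained proof. The one place that needs care, namely that the prepended walk is a genuine path, is exactly where you invoke $V(B_a)\cap V(P)=\{a\}$ together with $V(B_0)\cap V(P)=\emptyset$ (and, in the edge case, $V(B_0)\cap V(B_a)=\emptyset$), and you handle it correctly; you also correctly need only one of the two guaranteed connecting edges. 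This is an acceptable alternative to the usual greedy/inductive construction that builds the path element by element, and arguably simpler, since the extremal choice does all the bookkeeping at once.
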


\begin{LEMLARGEPATH}
	Let $r$ and $p$ be positive integers with $2p(p+1)\ge r$.
	If a directed graph $G$ contains an $r$-externally-well-linked set $T$ of size at least $12p(p+1)+1$, then 
	there exist a path $P$ in $G$ and $A\subseteq V(P)$ with $\abs{A}=p$ such that 
 \begin{itemize}
     \item $A$ is well-linked, and 
     \item for every subset $Z$ of $T$ of size at least $\abs{T}/2$, there is a linkage of order $p$ from $A$ to $Z$, and there is a linkage of order $p$ from $Z$ to $A$.
 \end{itemize} 
	\end{LEMLARGEPATH}
	\begin{proof}
	  Let $T$ be an $r$-externally-well-linked set of size $m\ge 12p(p+1)+1$ in a directed graph $G$.
	As $2p(p+1)\ge r$, by Lemma~\ref{lem:wls}, $T$ is $2p(p+1)$-linked.
	We construct a bramble $\cB$ of order at least $2p(p+1)$ as follows.
	By definition of a $k$-linked set, for every set $X$ of less than $2p(p+1)$ vertices in $G$, 
	$G-X$ has a unique strong component, say $C_X$, containing more than half of the vertices of~$T$.
	We define 
	\[\cB:=\{C_X:X\subseteq V(G), \abs{X}<  2p(p+1)\}.\]
	Since any two distinct sets in $\cB$ intersect on $T$, 
	$\cB$ is a bramble.
	The order of $\cB$ is at least $2p(p+1)$, because for every set $Y$ of less than $2p(p+1)$ vertices, $Y$ does not hit $C_Y$ in $\cB$. 

	By Lemma~\ref{lem:intersecting},  
	there is a path $P$ intersecting every element of $\cB$. We now find the required set $A$ in $P$. 
    We construct sequences of disjoint subpaths $P_1, \ldots, P_{2p}$ of $P$ 
	and brambles $\cB_1, \ldots, \cB_{2p}\subseteq \cB$ such that 
 \begin{itemize}
     \item $P_1, P_2, \dots, P_{2p}$ appear in this order on $P$, and  
     \item for each $i\in [2p]$, the order of $\cB_i$ is $p+1$ and $\cB_i\subseteq  \{B\in \mathcal{B}:V(B)\cap V(P_i)\neq \emptyset\}$. 
 \end{itemize}
	
    For a subpath $Q$ of $P$, we consider some subfamily $\mathcal{B}_Q$ of $\mathcal{B}$ such that $\mathcal{B}_Q\subseteq \{B\in \mathcal{B}:V(B)\cap V(Q)\neq \emptyset\}$. Clearly, $\mathcal{B}_Q$ is a bramble. We will use the fact that if 
    \begin{itemize}
        \item $Q^*$ is another subpath of $P$ with $V(Q^*)\setminus V(Q)=\{z\}$, and
        \item $\mathcal{B}_Q\subseteq \mathcal{B}_{Q^*}\subseteq \{B\in \mathcal{B}:V(B)\cap V(Q^*)\neq \emptyset\}$,
    \end{itemize}  
    then the order of $\mathcal{B}_{Q^*}$ is at most the order of $\mathcal{B}_Q$ plus one, because all sets in $\mathcal{B}_{Q^*}\setminus \mathcal{B}_Q$ can be hit by~$z$.

 Let $P_1$ be the minimal initial subpath of $P$ such that $\cB_1=\{B\in \cB: V(B)\cap V(P_1)\neq \emptyset\}$
	is a bramble of order $p+1$.

	Now, suppose that for some $i<2p$, sequences $P_1, \ldots, P_i$ and $\cB_1, \ldots, \cB_i$ have been constructed.	Let $v$ be the last vertex of $P_i$ and $s$ be the successor of $v$ in $P$.
	Let $P_{i+1}$ be the minimal subpath of $P$ starting at $s$ such that 
	\[ \cB_{i+1}=\left\{B\in \cB : V(B)\cap \left( \bigcup_{j\in [i]} V(P_j) \right)=\emptyset \text{ and } V(B)\cap V(P_{i+1})\neq \emptyset \right\} \]
	has order $p+1$.
	As $\cB$ has order $2p(p+1)$, 
	such sequences $P_1, \ldots, P_{2p}$ 
	and $\cB_1, \ldots, \cB_{2p}\subseteq \cB$ exist.
	For each $i\in [p]$, let $a_i$ be the first vertex of $P_{2i}$, and let $A=\{a_i: i\in [p]\}$.

 	We verify that $A$ is well-linked. 
	Let $X$ and $Y$ be subsets of $A$ with $\abs{X}=\abs{Y}=q$.
 Let $X=\{a_{i_t}:t\in [q]\}$ and $Y=\{a_{j_t}:t\in [q]\}$.
 Note that $q\le p$.
	We claim that there is a linkage from $X$ to $Y$ of order $q$.

	Suppose for contradiction that there is no linkage of order $q$ from $X$ to $Y$. Then by Menger's theorem, there is a separation $(C,D)$ of order less than $q$ in $G$ such that $X\subseteq C$ and $Y\subseteq D$.
	As $\abs{C\cap D}<q\le p$, for each $j\in [2p]$, $C\cap D$ is not a hitting set of $\mathcal{B}_j$. 
    Also, $C\cap D$ does not meet one of the paths in $\{P_{2i_t}:t\in [q]\}$.
     So,  
    there exist $\ell\in [q]$ and $B_1\in \cB_{2i_\ell}$ such that 
	\[(C\cap D)\cap (V(P_{2i_{\ell}})\cup V(B_1))=\emptyset.\] 
    Similarly, since $C\cap D$ does not meet one of the sets in $\{V(P_{2i_t-1})\cup \{a_{i_t}\}:t\in [q]\}$,
    there exist $\ell'\in [q]$ and $B_2\in \cB_{2j_{\ell'}-1}$ such that 
		\[(C\cap D)\cap (V(P_{2j_{\ell'}-1})\cup \{a_{j_{\ell'}}\}\cup  V(B_2))=\emptyset.\]
    On the other hand, by the construction of $\mathcal{B}$, $B_1$ and $B_2$ intersect. Since each of $B_1$ and $B_2$ is strongly connected, $B_1\cup B_2$ is also strongly connected. This implies that there is a path from $a_{i_{\ell}}$ to $a_{j_{\ell'}}$ in 
    \[B_1\cup B_2\cup P_{2i_{\ell}}\cup G[V(P_{2j_{\ell'}-1})\cup \{a_{j_{\ell'}}\}],\] which avoids $C\cap D$, a contradiction.
    We conclude that $A$ is well-linked.

	Lastly, we verify the second bullet.
	Let $Z\subseteq T$ with $\abs{Z}\ge \abs{T}/2$.
	Suppose that there is no linkage of order $p$ from $A$ to $Z$ in $G$.
	Then, by Menger's theorem, 
	there is a separation $(C, D)$ of order less than $p$ with $A\subseteq C$ and $Z\subseteq D$.
	
	As $\abs{C\cap D}<p$, 
	there exist $\ell\in [p]$ and $B\in \cB_{2i_\ell}$ such that $(C\cap D)\cap (V(P_{2i_{\ell}})\cup V(B))=\emptyset$.
	Since $a_{i_{\ell}}\in C\setminus D$, we have $V(B)\subseteq C\setminus D$ and  
	$B$ does not intersect $Z\subseteq D$. 
	It contradicts the fact that every set of $\cB$ contains more than half of the vertices in $T$.
	
	We conclude that there is a linkage of order $p$ from $A$ to $Z$, 
	and in the same way, we can show that there is a linkage of order $p$ from $Z$ to $A$.
	\end{proof}

\section{A half-integral Erd\H{o}s-P\'osa theorem for odd cycles}\label{sec:main}

 In this section,  we prove Theorem~\ref{thm:main}.
 
   We verify that if $\alpha_{k-1}\neq\infty$ and a directed graph $G$ with $\nu_2(G)<k$ has a hitting set $T$ of directed odd cycles with $\abs{T}=\tau(G)$, then $T$ is $(2\alpha_{k-1})$-externally-well-linked.

\begin{figure}
  \centering
  \begin{center}
    \tikzstyle{v}=[circle,draw,fill=black!50,inner sep=0pt,minimum width=4pt]

    \begin{tikzpicture}
      \draw[rounded corners, thick] (2,0) -- (6,0) -- (6,4) --(0,4)--(0,0)--(2,0);
    \draw[rounded corners, thick, double] (1,0.3)--(2,0.3)--(2,3.7)--(0.3,3.7)--(0.3,0.3)--(1,0.3);
    \draw[rounded corners, very thick, dashed] (-0.4,3.8)--(-0.4,1.4)--(6.4,1.4)--(6.4,3.8);
    \draw[rounded corners, very thick, dashed] (-0.2,0.2)--(-0.2,2.6)--(6.2,2.6)--(6.2,0.2);
    \draw[thick] (0.3,2.5)--(2,2.5);
    \draw[thick] (0.3,1.5)--(2,1.5);
    \draw[rounded corners, thick] (1,2.5)--(5.7,2.5)--(5.7,1.5)--(1,1.5);
      \draw(2.3,0.8) node [label=below:$T$]{};
      \draw(1,3.5) node [label=below:$A$]{};
      \draw(1,2.4) node [label=below:$Z$]{};
      \draw(1,1.3) node [label=below:$B$]{};
      \draw(5,2.4) node [label=below:$W$]{};
      \draw(6.8,3.5) node [label=below:$X$]{};
      \draw(6.8,1.3) node [label=below:$Y$]{};

    \end{tikzpicture}
  \end{center}
  \caption{The sets $A, B, Z, W$ and the separation $(X,Y)$ defined in Lemma~\ref{lem:welllinked}.}
\label{fig:welllinked}
\end{figure}
    \begin{lemma}\label{lem:welllinked}
    Let $k\ge 2$ be an integer such that $\alpha_{k-1}\neq \infty$. Let $G$ be a directed graph with $\nu_2(G) <k$ and let $T\subseteq V(G)$ with $|T|=\tau(G)$ meeting all odd cycles in $G$. Then $T$ is $(2\alpha_{k-1})$-externally-well-linked.
    \end{lemma}
    \begin{proof}
    Let $A, B\subseteq T$ be disjoint sets with $\abs{A}=\abs{B}=r\ge 2\alpha_{k-1}$. We claim that there is a linkage in $G$ from $A$ to $B$ of order $r$ containing no vertex in $T\setminus (A\cup B)$. 
    Suppose that there is no such a linkage.  
    
    Let $Z=T\setminus (A\cup B)$.
    By Menger's theorem applied to $G-Z$, 
    there is a separation $(X, Y)$ of $G$ with $A\subseteq X$, $B\subseteq Y$ such that 
    $Z\subseteq X\cap Y$ and $\abs{ (X\cap Y)\setminus Z} <r$.
    Let $W:=(X\cap Y)\setminus Z$.
    See Figure~\ref{fig:welllinked}
    for an illustration. 
    
    Let $T_A:= (T\setminus A)\cup W$ and $T_B:=(T\setminus B)\cup W$. 
    Note that \[\abs{W}= \abs{(X\cap Y)\setminus Z}<r=\abs{A}.\]
    Therefore, $\abs{T_A}<\abs{T}=\tau(G)$ and by a similar reason, $\abs{T_B}<\abs{T}=\tau(G)$. Thus, none of $T_A$ and $T_B$ is a hitting set for odd cycles.
    
    It means that there are an odd cycle $C_A$ in $G-T_A$, 
    and an odd cycle $C_B$ in $G-T_B$.
    Since $T$ is a hitting set for odd cycles, 
    $C_A$ must contain a vertex of $A$ and $C_B$ must contain a vertex of $B$.
     So, $G-Y$ contains $C_A$ and $G-X$ contains $C_B$ while $V(G-Y)\cap V(G-X)=\emptyset$.
     
    By the definition of $\alpha_{k-1}$, 
    $G-Y$ has a hitting set $M_Y$ of size at most $\alpha_{k-1}$, 
    and $G-X$ has a hitting set $M_X$ of size at most $\alpha_{k-1}$.
    Since $A$ and $B$ are disjoint, $\abs{T}-\abs{Z}= 2r$.
    It implies that $M_X\cup M_Y\cup (X\cap Y)$ is a hitting set for odd cycles in $G$ of size at most 
    \[2\alpha_{k-1}+((r-1)+\abs{Z})= 2\alpha_{k-1}+(\abs{T}-r)-1.\]
    So, $\tau(G)\le 2\alpha_{k-1}+\tau(G)-r-1$ 
    and $r<2\alpha_{k-1}$, which contradicts the choice of $r$.
    \end{proof}
    
    As we discussed in the introduction, 
    we will consider a set $N$ of nails in a bipartite cylindrical wall $W'$ where $N$ is contained in the same part of the bipartition of $W'$, 
    and apply Lemma~\ref{lem:oddwalk2} for odd $N$-walks. 
    When Lemma~\ref{lem:oddwalk2} outputs a hitting set for odd $N$-walks, 
    the following proposition will imply that there is a small hitting set for odd cycles.

    \begin{proposition}\label{prop:smallsep}
    Let $r$, $t$, and $w$ be positive integers with $w\ge 2t$ and $t\ge r$.
    Let $G$ be a directed graph, and let $T$ be a set of at least $6t-4$ vertices in $G$ such that 
    $T$ is a hitting set of odd cycles, and  
    it is $r$-externally-well-linked.
    Let $W$ be a cylindrical wall of order $w$ in $G$ satisfying that 
    for every subset $Z$ of $T$ of size at least $\abs{T}/2$ and every set $F$ of $w$ nails in $W$, there is a linkage of order at least $w/2$ from $Z$ to $F$ in $G$, and there is a linkage of order at least $w/2$ from $F$ to $Z$ in $G$.
    Let $N$ be a set of nails of $W$ with $\abs{N}\ge 2w^2$.
    
   If $G$ has a set of less than $t$ vertices hitting all odd $N$-walks, 
   then it has a set of at most $3(t-1)$ vertices
   hitting all odd cycles.
    \end{proposition}
    \begin{proof}
    Let $X$ be a set of less than $t$ vertices in $G$ hitting all odd $N$-walks.
    Let $\{H_1, H_2, \ldots, H_m\}$ be the set of all strong components of $G-X$, and assume that it is ordered in an acyclic ordering, that is, for distinct $i, j\in [m]$, there can be an edge from $H_i$ to $H_j$ only if $i<j$.

    As $t\ge r$ and $T$ is an $r$-externally-well-linked set of size at least $6t-4$,
    by Lemma~\ref{lem:wls}, $T$ is $t$-linked. 
    Since $T$ is $t$-linked and $X$ has size less than $t$, 
    $G-X$ has a unique strong component, say $H_x$, having more than half of the vertices in $T$.
    Note that $H_x$ contains at least $t$ vertices of $T$, as $3t-2\ge t$.
   If $\bigcup_{i\in [x-1]} V(H_i)$ contains at least $t$ vertices of $T$, 
   then since $T$ is $r$-externally-well-linked and $t\ge r$, there is a linkage of order $t$ from $T\cap V(H_x)$ to $T\cap (\bigcup_{i\in [x-1]} V(H_i))$. But every path in the linkage must contain a vertex of $X$, and it contradicts the assumption that $\abs{X}<t$. Therefore,
   $\bigcup_{i\in [x-1]} V(H_i)$ contains less than $t$ vertices of $T$, and similarly, $\bigcup_{i\in [m]\setminus [x]} V(H_i)$ contains less than $t$ vertices of $T$.

    As $w\ge 2t$, there is a set $\mathcal{C}$ of at least $w-(t-1)\ge w/2+1$ columns of $W$ containing no vertex of $X$.
    We claim that for each $C\in \mathcal{C}$, $C$ is contained in $H_x$.
    Let $C\in \mathcal{C}$ and 
    let $F$ be a set of $w$ nails of $W$ that are contained in $C$. Note that $V(H_x)\cap T$ is a subset of $T$ of size at least $\abs{T}/2$.
    So, by the assumption, 
    \begin{itemize}
        \item there is a linkage of order at least $w/2\ge t$ from $V(H_x)\cap T$ to $F$ in $G$, and
        \item there is a linkage of order at least $w/2\ge t$ from $F$ to $V(H_x)\cap T$ in $G$.
    \end{itemize}
    Since $C$ does not contain a vertex of $X$ and $C$ is strongly connected, $C$ is contained in one of the strong components in $\{H_i:i\in [m]\}$. But if $C$ is contained in a strong component other than $H_x$, then either 
    \begin{itemize}
        \item there is no linkage of order $t$ from $V(H_x)\cap T$ to $F$ in $G$, or 
        \item there is no linkage of order $t$ from $F$ to $V(H_x)\cap T$ in $G$.
    \end{itemize} 
    This is a contradiction. 
    Therefore, the claim holds.
    
    Note that each column contains $4w$ nails and the columns not in $\mathcal{C}$ contain at most $4w(w/2-1)$ nails in total. Since $2w^2\ge 4w(w/2-1)+2$, $H_x$ contains at least two nails of $W$ in $N$, say $v$ and $z$.

    We claim that $H_x$ contains no odd cycle.
    Suppose for contradiction that $H_x$ contains an odd cycle $H$.  
    Since $H_x$ is strongly connected, there is a path $P_v$ from $v$ to $H$ in $H_x$, 
    and there is a path $P_z$ from $H$ to $z$ in $H_x$.
    In $H\cup P_v\cup P_z$, there are two  walks from $v$ to $z$, namely, 
    one is obtained by using the shortest path in $H$ from the endvertex of $P_v$ in $H$ to the endvertex of $P_z$ in $H$, 
    and the other one is obtained by traversing $H$  one more time.
    As $H$ is an odd cycle, the two walks have different parities.
   So $G$ contains an odd walk between two nails of $W$ that is contained in $H_x$, which contradicts the assumption that $X$ hits all odd $N$-walks. Thus, $H_x$ has no odd cycle.

    For other strong components $H_y\neq H_x$, $T\cap V(H_y)$ intersects all odd cycles in $H_y$. 
    Therefore, $(T\cap (V(G)\setminus V(H_x))) \cup X$ hits all odd cycles. We remind that  $\bigcup_{i\in [x-1]} V(H_i)$ contains less than $t$ vertices of $T$, and $\bigcup_{i\in [m]\setminus [x]} V(H_i)$ contains less than $t$ vertices of $T$. Thus, $(T\cap (V(G)\setminus V(H_x))) \cup X$ has size at most~$3(t-1)$.
    \end{proof}

     By Proposition~\ref{prop:smallsep}, we may assume that Lemma~\ref{lem:oddwalk2} outputs a large half-integral packing of odd paths whose endvertices are distinct nails of $W'$. The rest of this section devotes to find a half-integral packing of $k$ odd cycles from it.

\begin{proposition}\label{prop:manywalks}
	There is a function $g_{path}: \mathbb{N}\rightarrow \mathbb{R}$ satisfying the following.
    Let $k$ be a positive integer, and let $W$ be a bipartite cylindrical wall of order at least $(2k+3)(6g_{path}(k)+1)$ in a directed graph $G$.
    Let $N$ be a set of nails of $W$ that are contained in the same part of the bipartition of $W$.
    Let $\cU$ be a half-integral packing of $12(g_{path}(k)-1)+1$ odd $N$-paths in $G$ such that 
    the endvertices of paths in $\cU$ are disjoint.
    Then $G$ contains a half-integral packing of $k$ odd cycles.
\end{proposition}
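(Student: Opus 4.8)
The plan is to use the bipartite cylindrical wall $W$ as a "routing device" and turn the $\Theta(g(k))$ half-integrally packed odd $N$-paths into $k$ directed odd cycles, each used with multiplicity at most two. First I would subdivide $W$ into $2k+1$ vertex-disjoint bipartite cylindrical subwalls $W^{(0)}, W^{(1)}, \dots, W^{(2k)}$, each of order at least $6g(k)+1$, by grouping the columns into $2k+1$ consecutive blocks (and keeping enough rows in each block so that each piece is genuinely a cylindrical wall of the claimed order); this is where the hypothesis on the order of $W$ is consumed. The rough idea is that each odd $N$-path $U \in \cU$, having both endpoints nails of $W$ lying on the same side of the bipartition, is \emph{parity-breaking} relative to $W$: concatenating $U$ with a path through $W$ joining its two endpoints produces a closed walk whose parity is odd (since a $W$-path between two same-side vertices is even, and $U$ is odd), hence a directed \emph{closed} odd walk, which contains a directed odd cycle. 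So a single odd $N$-path, closed up through $W$, already yields one directed odd cycle; the real work is to do this $k$ times \emph{simultaneously} with congestion at most two.

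The key steps, in order, would be: (i) establish the parity-breaking observation above and record that closing an odd $N$-path through any path in $W$ between its endpoints gives a directed closed odd walk, hence a directed odd cycle; (ii) argue that the columns of $W$ give many vertex-disjoint directed cycles "going around" the cylinder, and that within each subwall $W^{(i)}$ we can reach any nail from any other nail along a short directed path inside $W^{(i)}$ (using that a cylindrical wall is strongly connected and that rows link consecutive columns consistently); (iii) use the pigeonhole principle: with $12(g(k)-1)+1$ half-integrally packed odd $N$-paths and $2k+1$ subwalls, a constant fraction of the paths can be assigned to distinct subwalls — or more precisely, I would route each odd $N$-path so that the "closing" $W$-portion is confined to one dedicated subwall, so that distinct cycles only overlap on the $N$-paths themselves (which already have congestion $\le 2$ by hypothesis on $\cU$) and on the shared columns of $W$; (iv) control congestion on the columns: each of the $k$ final cycles uses at most a bounded number of columns/rows of $W$, and by choosing the $k$ cycles to be "spread out" — e.g. cycle $j$ uses only subwall $W^{(2j-1)}$ for its closing path and disjoint rows/segments for its connecting paths — we ensure every vertex of $G$ lies in at most two of them. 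The function $g(k)$ is chosen large enough (polynomial in $k$, matching the $12(g(k)-1)+1$ input and the $(2k+1)(6g(k)+1)$ order requirement) that the pigeonhole and the disjointness bookkeeping both go through.

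The main obstacle I expect is step (iii)–(iv): controlling the congestion on the shared part of the wall $W$. The odd $N$-paths in $\cU$ are only \emph{half-integrally} packed, their endpoints (though "disjoint" as stated) are scattered among the nails of $W$, and the natural closing paths through $W$ may reuse columns and rows heavily, pushing congestion above two. The fix is to be disciplined about the assignment: dedicate subwall $W^{(i)}$ and a private set of rows to cycle number $i$, route the closing path of cycle $i$ entirely inside $W^{(i)}$ along those private rows and the columns restricted to the block of $W^{(i)}$, and route the (short) connector paths from the endpoints of the chosen odd $N$-path into $W^{(i)}$ through still-further-reserved rows; since the original $\cU$ has congestion $\le 2$ and the wall-portions of distinct cycles are made vertex-disjoint by the block/row reservation, the union has congestion $\le 2$ overall. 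A secondary technical point is checking that the resulting closed walks are genuinely \emph{odd} after all the rerouting — here the parity-breaking property of $U$ is robust: any path inside the bipartite wall joining two same-side nails is even, so the total length stays odd no matter which $W$-route we pick, which is exactly why the bipartiteness of $W$ and the "same part of the bipartition" hypothesis on $N$ are essential. Finally I would invoke step (i) to extract a directed odd cycle from each of the $k$ directed closed odd walks, and observe that passing from a closed walk to a sub-cycle only decreases congestion, completing the half-integral packing of $k$ directed odd cycles.
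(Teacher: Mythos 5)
There is a genuine gap, and it sits exactly at the obstacle you flag in steps (iii)--(iv) but do not actually overcome. The paths in $\cU$ are odd $N$-paths in $G$, not in $W$: their \emph{interiors} may wander through the wall arbitrarily and meet essentially every column and row. Consequently no amount of ``reserving'' rows, columns, or subwalls for the closing paths can work as stated: you cannot reserve wall vertices against paths that are given to you rather than constructed by you. Concretely, a wall vertex $v$ lying in your dedicated subwall $W^{(j)}$ may already be an interior vertex of two paths of $\cU$ (congestion $2$ permitted by hypothesis); if the closing path of cycle $j$ also passes through $v$, the congestion becomes $3$. The paper's proof resolves precisely this with an idea absent from your proposal: each path of $\cU$ is first replaced by a parity-breaking \emph{subpath} (a ``blue-interval'') whose endpoints lie on the wall but whose internal wall-vertices are confined to a small, explicitly controlled set $B$ of at most $6g(k)$ certifying paths, built greedily one path at a time; the hypothesis $\abs{\cU}=12(g(k)-1)+1$ is consumed exactly to guarantee that at each step some path of $\cU$ has both endpoints outside $B$. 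Only after this shortening can one choose the closing structure ($2k$ consecutive columns and $4k$ consecutive rows, in the paper) disjoint from $B$, so that the wall-portions of the cycles meet the chosen parity-breaking paths only at their endpoints. Your parity argument in step (i) is correct and robust, as you say, but it survives the shortening only because a parity-breaking walk decomposes into blue-intervals at least one of which is parity-breaking --- another point your outline skips.

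A second, smaller gap: even after the shortening, the endpoints of the surviving parity-breaking paths are scattered nails, and connecting them to the closing structure by pairwise vertex-disjoint paths requires both a pigeonhole selection (so that the endpoints lie in distinct columns, or in the same column but in rows separated by at least $2$) and a rerouting step. The paper packages the rerouting as Lemma~\ref{lem:expandinghalf}: a vertex-duplication plus Menger argument that converts a $1/4$-integral family of parity-breaking walks into either $k$ half-integral directed odd cycles outright or a clean half-integral linkage of $k$ parity-breaking paths landing on the closing grid. Your proposal asserts the existence of suitable vertex-disjoint connectors without an argument; with $k$ connectors forced to traverse shared columns of a directed wall, this is not automatic. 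So the skeleton (parity-breaking plus closing through the bipartite wall) matches the paper, but the two devices that make the congestion bound of $2$ actually hold --- the blue-interval confinement to $B$ and the duplication/Menger rerouting --- are missing, and without them the proof does not go through.
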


We prove two auxiliary lemmas, and then prove Proposition~\ref{prop:manywalks}.
	Let $W$ be a bipartite cylindrical wall in a directed graph $G$.
For $v,w\in V(W)$, a walk $P$ in $G$ from $v$ to $w$ is \emph{parity-breaking} for~$W$ if the parity of the length of $P$ is different from the parity of a path from $v$ to $w$ in $W$. If the parities are the same, then we say that $P$ is \emph{parity-preserving} for $W$. 

	\begin{lemma}\label{lem:breaking}
	Let $G$ be a directed graph, and let $W$ be a bipartite cylindrical wall in $G$. If $P$ is a parity-breaking walk for $W$ from $a$ to $b$, then either $G[V(P)]$ contains an odd cycle, or it contains a parity-breaking path for $W$ from $a$ to $b$. 
	\end{lemma}
	\begin{proof}
	Let $Q=(q_1, q_2, \ldots, q_m)$ be a shortest parity-breaking walk from $a$ to $b$ contained in $G[V(P)]$.
	If $Q$ has no repeated vertices, then $Q$ is a parity-breaking path. Assume that there is a pair of repeated vertices. We choose such a pair $(q_i, q_j)$ with $\abs{j-i}$ is minimum. If the length from $q_i$ to $q_j$ is odd, then $G[V(P)]$ contains an odd cycle. Otherwise, it has even length, and by removing this part, we can find a shorter parity-breaking walk with same endvertices.
It contradicts the minimality of $Q$.
    \end{proof}

    \begin{figure}
  \centering
  \begin{center}
    \tikzstyle{v}=[circle,draw,fill=black!50,inner sep=0pt,minimum width=4pt]

    \begin{tikzpicture}[scale=1, decoration={
    markings,
    mark=at position 0.5 with {\arrow{>}}}]

    \draw[thick, postaction={decorate}] (1, 1.5)-- (3, 1.5);
    \draw[thick, postaction={decorate}] (1, 2)-- (3, 2);
    \draw[thick, postaction={decorate}] (1, 2.5)-- (3, 2.5);

    \draw[thick, postaction={decorate}] (7, 1.5)-- (9, 1.5);
    \draw[thick, postaction={decorate}] (7, 2)-- (9, 2);
    \draw[thick, postaction={decorate}] (7, 2.5)-- (9, 2.5);

    \draw[thick, postaction={decorate}] (3, 2.5) to [out=70,in=180] (4.5, 4.3);
    \draw[thick, postaction={decorate}] (4.5, 4.3) to [out=0,in=135] (6, 3);
    \draw[thick, postaction={decorate}] (6, 3) to [out=-45,in=170] (7, 2.5);

    \draw[thick, postaction={decorate}] (3, 2) to [out=40,in=-100] (4.2, 4.18);
    \draw[thick, postaction={decorate}] (4.2, 4.35) to [out=80,in=180] (5, 5.3);
    \draw[thick, postaction={decorate}] (5, 5.3) to [out=0,in=100] (6, 3.1);
    \draw[thick, postaction={decorate}] (6, 2.9) to [out=-80,in=180] (6.5, 1.5);
    \draw[thick, postaction={decorate}] (6.5, 1.5) -- (7, 1.5);

    \draw[thick, postaction={decorate}] (3, 1.5) to [out=30,in=-180] (4.5, 2.5);
    \draw[thick, postaction={decorate}] (4.5, 2.5) -- (5.95, 2.5);
    \draw[thick, postaction={decorate}] (6.15, 2.5) to [out=0,in=140] (7, 2);

    \draw[rounded corners, thick] (2,0) -- (10,0) -- (10,3.5) --(0,3.5)--(0,0)--(2,0);

    \draw[rounded corners, thick] (1,0.3)--(1.5,0.3)--(1.5,3.2)--(0.5,3.2)--(0.5,0.3)--(1,0.3);
    \draw[rounded corners, thick] (3,0.3)--(3.5,0.3)--(3.5,3.2)--(2.5,3.2)--(2.5,0.3)--(3,0.3);

    \draw[rounded corners, thick] (7,0.3)--(7.5,0.3)--(7.5,3.2)--(6.5,3.2)--(6.5,0.3)--(7,0.3);
    \draw[rounded corners, thick] (9,0.3)--(9.5,0.3)--(9.5,3.2)--(8.5,3.2)--(8.5,0.3)--(9,0.3);

      \draw(1,1) node [label=below:$A$]{};
      \draw(3,1) node [label=below:$B$]{};
      \draw(7,1) node [label=below:$C$]{};
      \draw(9,1) node [label=below:$D$]{};
      \draw(10.5,0.8) node [label=below:$W$]{};

    \draw(2,1.5) node [label=below:$\mathcal{Q}$]{};
    \draw(8,1.5) node [label=below:$\mathcal{R}$]{};
    \draw(5,2) node [label=below:$\mathcal{U}$]{};
     
    \draw (1, 1.5) node [v] () {};
    \draw (1, 2) node [v] () {};
    \draw (1, 2.5) node [v] () {};
    \draw (3, 1.5) node [v] () {};
    \draw (3, 2) node [v] () {};
    \draw (3, 2.5) node [v] () {};
    \draw (7, 1.5) node [v] () {};
    \draw (7, 2) node [v] () {};
    \draw (7, 2.5) node [v] () {};
    \draw (9, 1.5) node [v] () {};
    \draw (9, 2) node [v] () {};
    \draw (9, 2.5) node [v] () {};
    
    \end{tikzpicture}
  \end{center}
  \caption{The sets $A, B, C, D$ in a wall $W$ and the linkages $\mathcal{Q}, \mathcal{R}$ and the half-integral linkage $\mathcal{U}$ defined in Lemma~\ref{lem:expandinghalf}.}
\label{fig:expandinghalf}
\end{figure}

    \begin{figure}
  \centering
  \begin{center}
    \tikzstyle{v}=[circle,draw,fill=black!50,inner sep=0pt,minimum width=4pt]
  \tikzstyle{w}=[rectangle, draw, solid, fill=black, inner sep=0pt, minimum width=4pt, minimum height=4pt]
  
    \begin{tikzpicture}[scale=1, decoration={
    markings,
    mark=at position 0.5 with {\arrow{>}}}]

    \foreach \x in {1,2,3,4,5}{
    \draw[thick, postaction={decorate}] (\x, 3)-- (\x+1, 3);
    \draw[thick, postaction={decorate}] (\x+1, 2)-- (\x, 2);
    \draw[thick, postaction={decorate}] (\x, 1)-- (\x+1, 1);
    \draw[thick, postaction={decorate}] (\x+1, 0)-- (\x, 0);
    }
    \foreach \x in {2,4}{
    \draw[thick, postaction={decorate}] (\x, 3.8)-- (\x, 3);
    \draw[thick, postaction={decorate}] (\x+1, 3)-- (\x+1, 2);
    \draw[thick, postaction={decorate}] (\x, 2)-- (\x, 1);
    \draw[thick, postaction={decorate}] (\x+1, 1)-- (\x+1, 0);
    \draw[thick, postaction={decorate}] (\x, 0)-- (\x, -0.8);
    }

    \foreach \x in {2,3,4,5}{
    \foreach \y in {0,1,2,3}{
    \draw (\x, \y) node [v] () {};
    }
    }

     \draw[thick, postaction={decorate}] (2, 3) to[out=60, in=180] (2.7, 4.5);
     \draw[thick, postaction={decorate}] (2.7, 4.5) to[out=0, in=180] (3.3, 4.5);
     \draw[thick, postaction={decorate}] (3.3, 4.5) to[out=0, in=120] (4, 3);
    \draw (2.6, 4.5) node [v] () {};
    \draw (3.4, 4.5) node [v] () {};
    
     \draw[thick, postaction={decorate}] (5, 2) to[out=60, in=0] (4, 5);
     \draw[thick, postaction={decorate}] (4, 5) to[out=180, in=0] (3, 5);
     \draw[thick, postaction={decorate}] (3, 5) to[out=180, in=0] (2, 5);
     \draw[thick, postaction={decorate}] (2, 5) to[out=180, in=90] (0.5, 3.5);
     \draw[thick, postaction={decorate}] (0.5, 3.5) to[out=-90, in=140] (2,2);
    \draw (4, 5) node [v] () {};
    \draw (3, 5) node [v] () {};
    \draw (2, 5) node [v] () {};
    \draw (0.5, 3.5) node [v] () {};

    \draw(2,1) node [label=below:$q\in A$]{};
    \draw(5,0) node [label=below:$r\in D$]{};
    \draw[color=black, very thick](2,1) circle (0.2);
    \draw[color=black, very thick](5,0) circle (0.2);
        
    \draw(3.5,-1) node [label=below:$G$]{};
    \end{tikzpicture} \qquad\qquad
       \begin{tikzpicture}[scale=1, decoration={
    markings,
    mark=at position 0.5 with {\arrow{>}}}]

     \foreach \x in {1,5}{
    \draw[thick, postaction={decorate}] (\x, 3)-- (\x+1, 3);
    \draw[thick, postaction={decorate}] (\x+1, 2)-- (\x, 2);
    \draw[thick, postaction={decorate}] (\x, 1)-- (\x+1, 1);
    \draw[thick, postaction={decorate}] (\x+1, 0)-- (\x, 0);
    }

    \foreach \x in {2,3,4}{
    \draw[thick, postaction={decorate}] (\x, 3)-- (\x+0.7, 3);
    \draw[thick, postaction={decorate}] (\x+0.5, 3)-- (\x+1, 3);
    \draw[thick, postaction={decorate}] (\x+1, 2)-- (\x+0.3, 2);
    \draw[thick, postaction={decorate}] (\x+0.5, 2)-- (\x, 2);
    \draw[thick, postaction={decorate}] (\x, 1)-- (\x+0.7, 1);
    \draw[thick, postaction={decorate}] (\x+0.5, 1)-- (\x+1, 1);
    \draw[thick, postaction={decorate}] (\x+1, 0)-- (\x+0.3, 0);
    \draw[thick, postaction={decorate}] (\x+0.5, 0)-- (\x, 0);
    \draw (\x+0.5, 3) node [w] () {};
    \draw (\x+0.5, 2) node [w] () {};
    \draw (\x+0.5, 1) node [w] () {};
    \draw (\x+0.5, 0) node [w] () {};
    }
    \foreach \x in {2,4}{
    \draw[thick, postaction={decorate}] (\x, 3.8)-- (\x, 3);
    \draw[thick, postaction={decorate}] (\x+1, 3)-- (\x+1, 2.3);
    \draw[thick, postaction={decorate}] (\x+1, 2.5)-- (\x+1, 2);
    \draw[thick, postaction={decorate}] (\x, 2)-- (\x, 1.3);
    \draw[thick, postaction={decorate}] (\x, 1.5)-- (\x, 1);
    \draw[thick, postaction={decorate}] (\x+1, 1)-- (\x+1, 0.3);
    \draw[thick, postaction={decorate}] (\x+1, 0.5)-- (\x+1, 0);
    \draw[thick, postaction={decorate}] (\x, 0)-- (\x, -0.8);
    \draw (\x+1, 2.5) node [w] () {};
    \draw (\x, 1.5) node [w] () {};
    \draw (\x+1, 0.5) node [w] () {};
    }

    \foreach \x in {2,3,4,5}{
    \foreach \y in {0,1,2,3}{
    \draw (\x, \y) node [v] () {};
    }
    }

    \draw (1.2, 1.7) node [w] (v1) {};
    \draw (0.4, 1.7) node [w] (v2) {};
    \draw (2, 1) node [v] (v) {};
    \draw[thick, postaction={decorate}] (v2)-- (v1);
    \draw[thick, postaction={decorate}] (v1)-- (v);
    \draw(1.2,1.8) node [label=below:$q^2$]{};
    \draw(0.4,1.8) node [label=below:$q^1$]{};

    \draw (5.8, 0.7) node [w] (v3) {};
    \draw (6.6, 0.7) node [w] (v4) {};
    \draw (5, 0) node [v] (v5) {};
    \draw[thick, postaction={decorate}] (v5)-- (v3);
    \draw[thick, postaction={decorate}] (v3)-- (v4);
    \draw(5.8,0.8) node [label=below:$r^2$]{};
    \draw(6.6,0.8) node [label=below:$r^1$]{};

    \draw(2,1) node [label=below:$q\in A$]{};
    \draw(5,0) node [label=below:$r\in D$]{};
    \draw[color=black, very thick](2,1) circle (0.2);
    \draw[color=black, very thick](5,0) circle (0.2);

    \draw[thick, postaction={decorate}](2,3) to[bend left=45] (4,3);
    \draw[thick, postaction={decorate}](5,2) to[out=140,in=0] (3.5,2.7);
    \draw[thick, postaction={decorate}](3.5,2.7) to[out=180,in=40] (2,2);
    \draw (3.5, 2.7) node [w] () {};
      
    \draw(3.5,-1) node [label=below:$F_1$]{};
    \end{tikzpicture}
  \end{center}
  \caption{An illustration of the construction of $F_1$ from $G$ in Lemma~\ref{lem:expandinghalf}.}
\label{fig:construction}
\end{figure}

For the following lemma, see Figure~\ref{fig:expandinghalf} for an illustration of the initial setting.
	\begin{lemma}\label{lem:expandinghalf}
		Let $k$ and $m$ be positive integers. Let $G$ be a directed graph, $W$ be a bipartite cylindrical wall in $G$, and let $A, B, C, D$ be disjoint subsets of $V(W)$ of size $m$.
		Let $\cQ$ be a linkage of order $m$ from $A$ to $B$ in $W$, 
		and let $\cR$ be a linkage of order $m$ from $C$ to  $D$ in $W$.
		Let $\cU$ be a half-integral packing of $m$ parity-breaking paths from $B$ to $C$ in $G$ such that the first vertices of paths in $\cU$ are all distinct and the last vertices of paths in $\cU$ are all distinct.
		If $m\ge 8k$, then there is either 
		\begin{itemize}
		    \item a half-integral packing of $k$ odd cycles, or
		    \item a half-integral packing of $k$ parity-breaking paths from $A$ to $D$ in $(\bigcup\cQ)\cup (\bigcup\cR)\cup (\bigcup\cU)$ such that the first vertices of the paths are all distinct and the last vertices of the paths are all distinct.
		\end{itemize} 		
	\end{lemma}
	\begin{proof}
	We construct a directed graph $F_1$ starting from the vertex set $V(W)$  and the empty edge set
	as follows. See Figure~\ref{fig:construction} for an illustration.
	\begin{itemize}
		\item For every edge $(u, v)$ in $E(W)$, we add a new vertex $x_{uv}$ and two edges $(u, x_{uv})$ and $(x_{uv}, v)$. 
		\item For every $W$-path $P$ from a vertex $u$ to a vertex $v$ that is a subpath of some path in $\cU$, 
		if $P$ is parity-breaking, then we add an edge $(u,v)$, and 
		otherwise, we add a vertex $z_{uv}$ and two edges $(u,z_{uv})$ and $(z_{uv}, v)$.
        We denote it by $\widehat{P}$.
		\item For every $q\in A$, we add two new vertices $q^1$ and $q^2$ and add edges $(q^1, q^2)$ and $(q^2, q)$.
		\item For every $r\in D$, we add two new vertices $r^1$ and $r^2$ and add edges $(r, r^2)$ and $( r^2, r^1)$.
	\end{itemize}
	We assign $A_1:=\{q^1: q\in A\}$ and $D_1:=\{r^1: r\in D\}$. 

    We say that a walk $J$ between two vertices of $W$ is \emph{pure} if every $W$-subpath of $J$ is a subpath of some path in $\mathcal{U}$.
    For a pure walk $J$ from a vertex $q\in A$ to a vertex $r\in D$ in $(\bigcup\cQ)\cup (\bigcup\cR)\cup (\bigcup\cU)$, we obtain a walk $\widehat{J}$ in $F_1$ by, 
    \begin{itemize}
        \item for each edge $(u,v)$ of $W$ in $J$, replacing it with $(u,x_{uv}), (x_{uv},v)$,
        \item for each $W$-path $P$ contained in $J$, replacing it with $\widehat{P}$, and
        \item adding vertices $q^1, q^2, r^1, r^2$ and edges $(q^1,q^2), (q^2,q), (r,r^2), (r^2,r^1)$.
    \end{itemize} 

    Let $J$ be a pure walk from a vertex $q\in A$ to a vertex $r\in D$ in $(\bigcup\cQ)\cup (\bigcup\cR)\cup (\bigcup\cU)$. 
    Then $J$ is parity-breaking if and only if the number of parity-breaking $W$-paths contained in $J$ is odd.
    As we replace each edge of $W$ and each parity-preserving $W$-path by a path of length $2$ while we replace each parity-breaking $W$-path by a path of length $1$, the parity of the number of parity-breaking paths contained in $J$ is the parity of $\widehat{J}$ in $F_1$. Thus, $J$ is parity-breaking if and only if $\widehat{J}$ is odd.

    Note that combining a path $Q\in \mathcal{Q}$ from $a$ to $b$, and a path $U\in \mathcal{U}$ from $b$ to $c$, and a path $R\in \mathcal{R}$ from $c$ to $d$, we get a pure walk from $a$ to $d$. As two paths in $\cU$ may share a vertex on a path in $\cQ\cup \cR$, there is a set of $m$ pure walks from $A$ to $D$ such that 
    \begin{itemize}
        \item every vertex of $G$ is used at most $4$ times,
        \item the first vertices of them are all distinct, and the last vertices of them are all distinct.
    \end{itemize} 
	Thus, there is a set of $m$ odd walks from $A_1$ to $D_1$ in $F_1$ such that 
	\begin{itemize}
	    \item every vertex of $F_1$ is used at most $4$ times, and 
	    \item for each vertex $w\in A_1\cup D_1$, there is at most one walk containing $w$ in the $m$ odd walks.
	\end{itemize}

    Now, we obtain a bipartite directed graph $F_2$ with bipartition $(X, Y)$ from $F_1$ such that 
\begin{itemize}
    \item $X=\{v_1: v\in V(F_1)\}$ and $Y=\{v_2: v\in V(F_1)\}$, 
    \item $E(F_2)=\{(v_1,w_2), (v_2,w_1):(v,w)\in E(F_1)\}$.
\end{itemize}
Let $A_2:=\{q^1_1:q\in A\}$ and $D_2:=\{r^1_2:r\in D\}$.

    Observe that a walk from $A_2$ to $D_2$ in $F_2$ corresponds to an odd walk from $A_1$ to $D_1$ in $F_1$.
	Thus, 
	there is a set of $m$ walks from $A_2$ to $D_2$ in $F_2$ such that every vertex is used at most $4$ times.
	So, there is a $1/4$-integral packing of $m$ paths from $A_2$ to $D_2$ in $F_2$.
	Since $m\ge 8k$, by Lemma~\ref{lem:mintegral}, 
	there is a linkage of order $2k$ from $A_2$ to $D_2$ in $F_2$.
	
	It implies that there is a set $\mathcal{L}_1$ of $2k$ odd walks from $A_1$ to $D_1$ in $F_1$ such that 
	\begin{itemize}
	    \item every vertex of $F_1$ is used at most twice, 
	    \item the first vertices of paths in $\mathcal{L}_1$ are all distinct, and  
	    \item the last vertices of paths in $\mathcal{L}_1$ are all distinct.
	\end{itemize}
	Furthermore, 
	there is a set $\mathcal{L}_2$ of $2k$ parity-breaking walks from $A$ to $D$  in $(\bigcup\cQ)\cup (\bigcup\cR)\cup (\bigcup\cU)$ such that 
	\begin{itemize}
	    \item every vertex of $G$ is used at most twice, 
	     \item the first vertices of paths in $\mathcal{L}_2$ are all distinct, and  
	    \item the last vertices of paths in $\mathcal{L}_2$ are all distinct.
	\end{itemize} 
	Now, by Lemma~\ref{lem:breaking}, 
	either there is a half-integral packing of $k$ odd cycles, or there is a half-integral packing of $k$ parity-breaking paths from $A$ to $D$ in $(\bigcup\cQ)\cup (\bigcup\cR)\cup (\bigcup\cU)$, where the first vertices are all distinct, and the last vertices are all distinct.
	\end{proof}

\begin{proof}[Proof of Proposition~\ref{prop:manywalks}]
We set 
\begin{itemize}
    \item $g_3(k)=8k$
	\item $g_2(k)=4g_3(k)$,
	\item $g_1(k)=(2g_2(k)-1)^2+1$,
	\item $g_{path}(k)=g(k)=(2g_1(k)-1)^2+1$.
\end{itemize}
Let $w$ be the order of $W$. Let $C_1, \ldots, C_w$ be the columns of $W$, and $P_1, \ldots, P_{2w}$ be the rows of $W$. We will consider indices of rows by the congruence modulo $2w$. Recall that $N^W$ is the set of all nails of $W$.

 Since every path in $\cU$ is an odd path between two nails in the same part of the bipartition of $W$, every path in $\cU$ is parity-breaking. We start with finding subpaths of some paths in $\cU$ so that they are still parity-breaking and do not intersect many $N^W$-paths in $W$. 

    \begin{claim}\label{claim:paritybreaking}
    For every $t\in [g(k)]$, there is a half-integral packing of parity-breaking paths $U_1, U_2, \ldots, U_t$ for $W$ such that for each $i\in [t]$, 
    \begin{enumerate}[(i)]
        \item\label{breaking1} $\bigcup_{j\in [i]}U_j$ intersects at most $6i$ $N^W$-paths in $W$, and
        \item\label{breaking2} 
        $\bigcup_{j\in [i-1]}U_j$ does not intersect any $N^W$-path containing an endvertex of $U_i$.
    \end{enumerate}
    \end{claim}
    \begin{pfofclaim}
    We prove the statement by induction on $1\le t\le g(k)$.
    Assume that such a set of paths $U_1, \ldots, U_{t-1}$ has been constructed for some $t\le g(k)$.  
    By Property~(\ref{breaking1}), 
    $\bigcup_{j\in [t-1]}U_j$ intersects at most $6(t-1)\le 6(g(k)-1)$ $N^W$-paths in $W$. 
    Let $\mathcal{A}$ be the set of all $N^W$-paths in $W$ that contain a vertex of $\bigcup_{j\in [t-1]}U_j$, and let $B:=\bigcup_{Q\in \cA} V(Q)$. 
    Note that $B$ contains at most $12(g(k)-1)$ nails.
    Since $\abs{\cU}=12(g(k)-1)+1$ and the endvertices of paths in $\cU$ are disjoint, 
    there is a path $U\in \cU$ such that
    the endvertices of $U$ are not contained in $B$.
    
    Let $U=u_1u_2 \cdots u_m$.
    Let $\mathcal{Q}$ be the set of all subpaths $U^*$ of $U$ of length at least $1$ where its endvertices are in $V(W)\setminus B$ and all internal vertices are not in $V(W)\setminus B$. 
    
    Note that the paths in $\mathcal{Q}$ are pairwise edge-disjoint, and $\bigcup_{Q\in \mathcal{Q}}E(Q)=E(U)$.
    Since $U$ is parity-breaking, $\mathcal{Q}$ contains at least one parity-breaking path. 
    Let $U'$ be a parity-breaking path in $\mathcal{Q}$.
    Note that every vertex of $W$ is contained in at most three $N^W$-paths.
    Since all the internal vertices of $U'$ are not contained in $V(W)\setminus B$, 
    $U_1\cup U_2\cup \cdots \cup U_{t-1}\cup U'$ intersects at most $6(t-1)+6\le 6t$ $N^W$-paths in $W$, and 
    the $N^W$-paths containing the endvertices of $U'$ are not used by paths in $U_1, \ldots, U_{t-1}$.
    Thus, the claim holds.
    \end{pfofclaim}

    By Claim~\ref{claim:paritybreaking}, there is a half-integral packing of parity-breaking paths $U_1, U_2, \ldots, U_{g(k)}$ that intersect at most $6g(k)$ $N^W$-paths.         
    Let $\mathcal{A}$ be the set of all $N^W$-paths in $W$ that contain a vertex of $\bigcup_{j\in [g(k)]}U_j$, and let $B:=\bigcup_{Q\in \cA} V(Q)$.

  	Recall that the order of $W$ is  at least $(2k+3)(6g(k)+1)$, and 
  	each $N^W$-path may intersect at most two columns and at most two rows.
  	As \[(2k+3)(6g(k)+1)-12g(k)\ge (2k+1)(6g(k)+1)+1,\]
	there is a set of $2k+2$ consecutive columns, say $C_{z+1}, C_{z+2}, \ldots, C_{z+2k+2}$, containing no vertices of $B$.
	Also, since
	\[(4k+6)(6g(k)+1)-12g(k)\ge (4k+4)(6g(k)+1)+1,\]
	there is a set of $4k+5$ consecutive rows containing no vertices of $B$.
	Among these $4k+5$ rows, we choose $4k+4$ consecutive rows $P_{y+1}, P_{y+2}, \ldots, P_{y+4k+4}$ such that 
	$P_{y+1}$ is a row traversing from $C_1$ to $C_w$.
	We define \[W^*=P_{y+1}\cup P_{y+2}\cup \cdots \cup P_{y+4k+4} \cup C_{z+1} \cup C_{z+2} \cup \cdots \cup C_{z+2k+2}.\] 
	See Figure~\ref{fig:reroute} for an illustration of $W^*$.
	Observe that $V(W^*)\cap B=\emptyset$.
   Let $\clos (W^*)$ be the subgraph of $W$ that is the union of $W^*$ and all $N^W$-paths whose both endvertices are in $W^*$.

\begin{figure}
  \centering
  \includegraphics[scale=0.7]{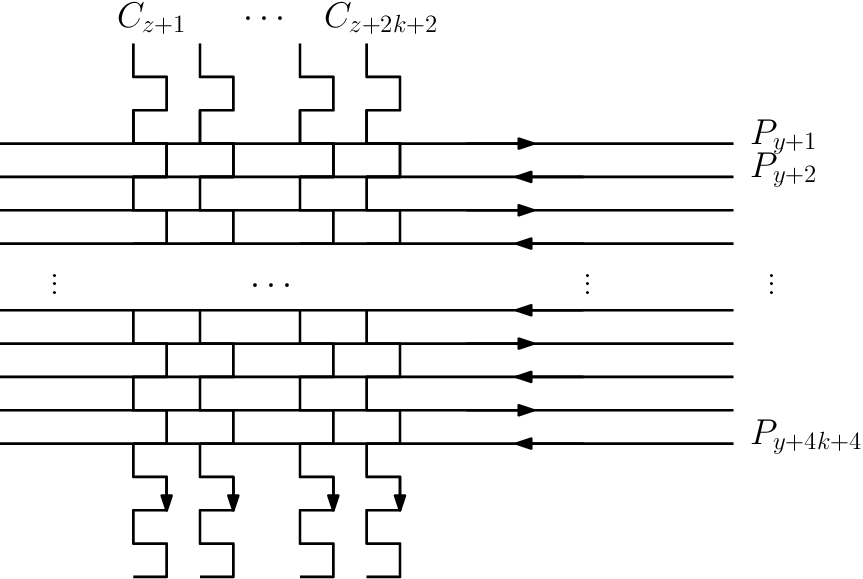}
\caption{Selected consecutive columns and rows in the proof of Proposition~\ref{prop:manywalks}.  }\label{fig:reroute}
\end{figure}

	Let $L$ be the bijection from the set of all nails of $W$ to $[w]\times [2w]\times [2]$ satisfying the following.
	\begin{itemize}
	    \item Let $i\in [w]$ and $j\in [2w]$. 
	When we traverse $C_i$ from $P_1$ to $P_{2w}$,
	$C_i$ contains two nails of each $P_j$, 
	and for the first vertex $v$, $L(v)=(i, j, 1)$ and 
	for the second vertex $v$, $L(v)=(i, j, 2)$.
	\end{itemize}

    For each $i\in [g(k)]$, we define the following.
    \begin{itemize}
        \item Let $p_i$ and $q_i$ be the endvertices of $U_i$ such that $U_i$ is a path from $p_i$ to $q_i$.
    	\item If $p_i$ is a nail, then let $p_i^*:=p_i$ and $A_i:=G[\{ p_i \}]$. 
	 Otherwise, let $p_i^*$ be the first vertex of the $N^W$-path in $W$ containing $p_i$
	               and let $A_i$ be the subpath from $p_i^*$ to $p_i$ in the $N^W$-path. Let $(a_i, b_i, c_i):=L(p_i^*)$.
	 	\item If $q_i$ is a nail, then let $q_i^*:=q_i$ and $D_i:=G[\{q_i\}]$. 
            Otherwise, let $q_i^*$ be the last vertex of the $N^W$-path in $W$ containing $q_i$ 
             and let $D_i$ be the subpath from $q_i$ to $q_i^*$ in the $N^W$-path. Let $(d_i, e_i, f_i):=L(q_i^*)$. 
    	\end{itemize}
    
   Since $g(k)=(2g_1(k)-1)^2+1$, there is a subset $I_1\subseteq [g(k)]$ of size $2g_1(k)$ such that 
    either 
   	    \begin{itemize}
   	        \item all integers in $(a_i : i\in I_1)$ are distinct, or
   	        \item all integers in $(a_i : i\in I_1)$ are the same.
	\end{itemize}
	There is a subset $I_2\subseteq I_1$ with $\abs{I_2}\ge g_1(k)$
	 such that 
	all integers in $(c_i: i\in I_2)$ are the same. Since all integers in $(c_i: i\in I_2)$ are the same, 
	all integers in $(b_i : i\in I_2)$ are distinct when all integers in $(a_i : i\in I_1)$ are the same.
	Furthermore, as $g_1(k)=(2g_2(k)-1)^2+1$, 
	there is a subset $I_3\subseteq I_2$ of size $2g_2(k)$ such that 
   	either 
	\begin{itemize}
	    \item all integers in $(d_i : i\in I_3)$ are distinct, or
	    \item all integers in $(d_i : i\in I_3)$ are the same.
	\end{itemize}  
	There is a subset $I_4\subseteq I_3$ of size $g_2(k)$ such that all integers in $(f_i : i\in I_4)$ are the same.
	Since  all integers in $(f_i : i\in I_4)$ are the same,
		all integers in $(e_i : i\in I_3)$ are distinct when all integers in $(d_i : i\in I_3)$ are the same. 

	Lastly, we take a subset $I_5\subseteq I_4$ of size $g_2(k)/4=g_3(k)$ such that 
   \begin{itemize}
   	\item if all integers in $(a_i : i\in I_4)$ are the same, then $y+4k+5\notin (b_i:i\in I_5)$ (as modulo $2w$) and $\abs{b_{i_1}-b_{i_2}}\ge 2 \pmod {2w}$ for all distinct $i_1, i_2\in I_5$, and
	\item if all integers in $(d_i : i\in I_4)$ are the same, then $y\notin (e_i:i\in I_5)$ (as modulo  $2w$) and $\abs{e_{i_1}-e_{i_2}}\ge 2 \pmod {2w}$ for all distinct $i_1, i_2\in I_5$.
   \end{itemize}
   We can greedily choose elements of $I_5$ from $I_4$.
    We choose this subset $I_5$ in a way to give enough space for paths to be picked disjointly later.

    Now, we construct a linkage $\{X_i : i\in I_5\}$ from $V(W^*)$ to $\{p_i : i\in I_5\}$ in $W$,  and a linkage $\{Y_i : i\in I_5\}$ from $\{q_i : i\in I_5\}$ to $V(W^*)$ in $W$. 
   We will apply Lemma~\ref{lem:expandinghalf}, together with the half-integral linkage $\{U_1, \ldots, U_{g(k)}\}$.

\begin{figure}
     \centering
     \begin{subfigure}[b]{0.48\textwidth}
         \centering
         \includegraphics[width=\textwidth]{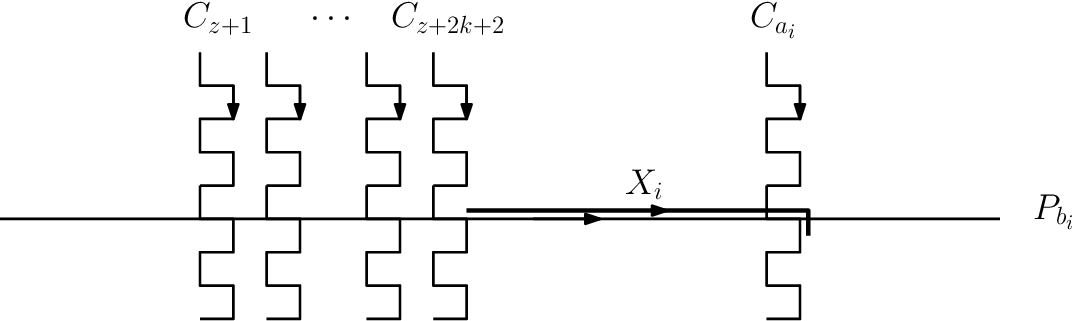}
         \caption{$b_i$ is odd and $a_i>z+2k+2$}
         \label{fig:path2}
     \end{subfigure}
     \hfill
     \begin{subfigure}[b]{0.48\textwidth}
         \centering
         \includegraphics[width=\textwidth]{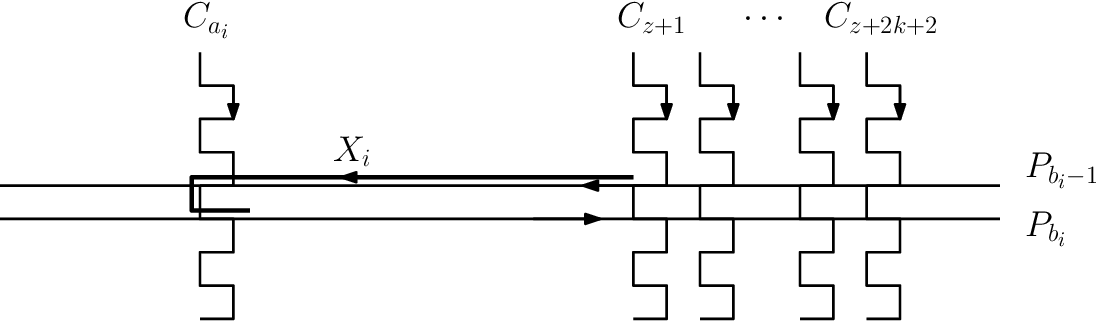}
         \caption{$b_i$ is odd and $a_i<z+1$}
         \label{fig:path3}
     \end{subfigure}
     \hfill
     \vskip 0.3cm
     \begin{subfigure}[b]{0.48\textwidth}
         \centering
         \includegraphics[width=\textwidth]{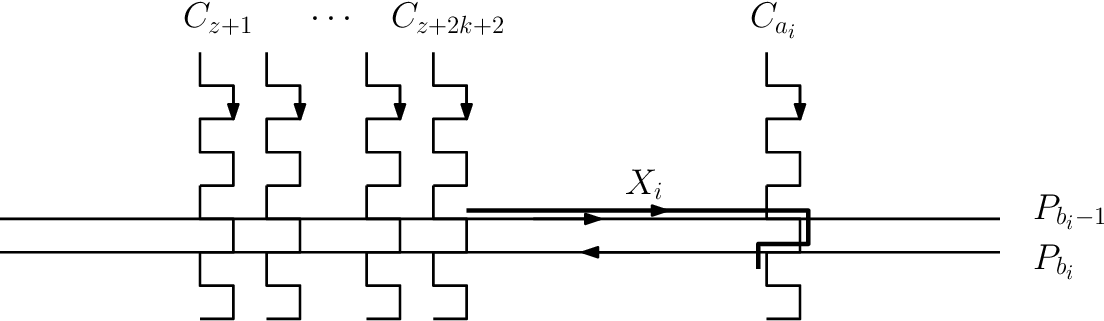}
         \caption{$b_i$ is even and $a_i>z+2k+2$}
         \label{fig:path5}
     \end{subfigure}
     \hfill
     \begin{subfigure}[b]{0.48\textwidth}
         \centering
         \includegraphics[width=\textwidth]{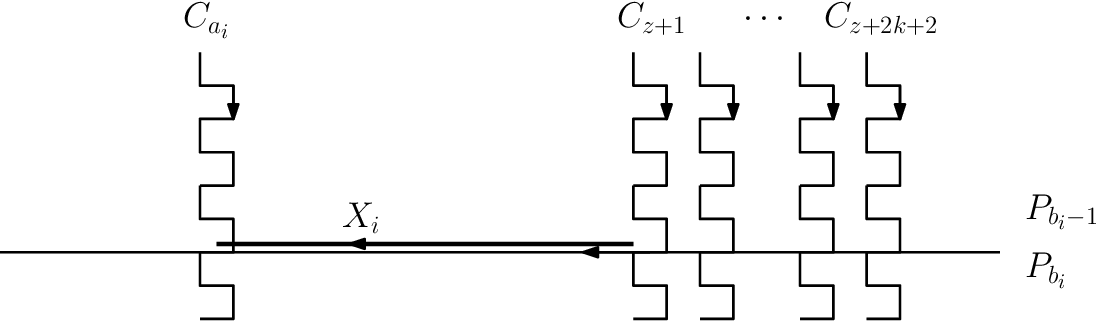}
         \caption{$b_i$ is even and $a_i<z+1$}
         \label{fig:path4}
     \end{subfigure}
        \caption{The construction of $X_i$ when all integers in $(a_i : i\in I_5)$ are the same.}
        \label{fig:lipath}
\end{figure}

   \begin{itemize}
       \item Assume that all integers in $(a_i : i\in I_5)$ are distinct.
     Let $X_i$ be the path starting at $L^{-1}(a_i, y+4k+4, 2)$, traversing to $p_i^*$ in $C_{a_i}$, and traversing to $p_i$ in $A_i$.
    \item Otherwise, all integers in $(a_i : i\in I_5)$ are the same and
	all integers in $(b_i : i\in I_5)$ are distinct. We divide into four cases. See Figure~\ref{fig:lipath} for illustrations.
	\begin{itemize}
	    \item ($b_i$ is odd and  $a_i>z+2k+2$.) Let $X_i$ be the path starting at $L^{-1}(z+2k+2, b_i, 2)$, traversing to $p_i^*$ in $P_{b_i}$, and traversing to $p_i$ in $A_i$.
	    \item ($b_i$ is odd and  $a_i<z+1$.) Let $X_i$ be the path starting at $L^{-1}(z+1, b_i-1, 2)$, traversing to $L^{-1}(a_i, b_i-1, 2)$ in $P_{b_i-1}$, traversing to $p_i^*$ in $C_{a_i}$, and traversing to $p_i$ in $A_i$.
	    \item ($b_i$ is even and  $a_i>z+2k+2$.) Let $X_i$ be the path starting at $L^{-1}(z+2k+2, b_i-1, 2)$, traversing to  $L^{-1}(a_i, b_i-1, 2)$ in $P_{b_i-1}$, traversing to $p_i^*$ in $C_{a_i}$, and traversing to $p_i$ in $A_i$.
        \item ($b_i$ is even and  $a_i<z+1$.) Let $X_i$ be the path starting at $L^{-1}(z+1, b_i, 2)$, traversing to $p_i^*$ in $P_{b_i}$, and   traversing to $p_i$ in $A_i$.
	\end{itemize} 
   \end{itemize}
    We observe that all paths in $\{X_i : i\in I_5\}$ are pairwise vertex-disjoint.
    When  all integers in $(a_i : i\in I_5)$ are distinct, 
    each path $A_i$ is starting from a vertex of $C_{a_i}$, but does not meet other column of $W$. 
    So, all paths in $\{A_i : i\in I_5\}$ are pairwise vertex-disjoint and all paths in $\{X_i : i\in I_5\}$ are pairwise vertex-disjoint.
    The case when all integers in $(a_i : i\in I_5)$ are the same is similar, 
    and  for the second and third subcases of the second case, we additionally use the fact that $y+4k+5\notin (b_i:i\in I_5)$ (as modulo $2w$) and $\abs{b_{i_1}-b_{i_2}}\ge 2 \pmod {2w}$ for all distinct $i_1, i_2\in I_5$.

   We define paths $Y_i$ in a symmetric way.
   \begin{itemize}
       \item Assume that all integers in $(d_i : i\in I_5)$ are distinct. 
       Let $Y_i$ be the path starting at $q_i$, traversing to $q_i^*$ in $D_i$, and traversing to $L^{-1}(d_i, y+1, 1)$ in $C_{d_i}$.
        \item Otherwise, all integers in $(d_i : i\in I_5)$ are the same and
	all integers in $(e_i : i\in I_5)$ are distinct. We divide into four cases.
	\begin{itemize}
	    \item ($e_i$ is odd and  $d_i>z+2k+2$.) Let $Y_i$ be the path starting at $q_i$, traversing to $q_i^*$ in $D_i$, traversing to $L^{-1}(d_i, e_i+1, 1)$ in $C_{d_i}$, and traversing to $L^{-1}(z+2k+2, e_i+1, 1)$ in $P_{e_i+1}$.
	    \item ($e_i$ is odd and  $d_i<z+1$.) Let $Y_i$ be the path starting at $q_i$, traversing to $q_i^*$ in $D_i$, and traversing to $L^{-1}(z+1, e_i, 1)$ in $P_{e_i}$.
    	   \item ($e_i$ is even and  $d_i>z+2k+2$.) Let $Y_i$ be the path starting at $q_i$, traversing to $q_i^*$ in $D_i$, and traversing to $L^{-1}(z+2k+2, e_i, 1)$ in $P_{e_i}$.
	    \item ($e_i$ is even and  $d_i<z+1$.) Let $Y_i$ be the path  starting at $q_i$, traversing to $q_i^*$ in $D_i$, traversing to $L^{-1}(d_i, e_i+1, 1)$ in $C_{d_i}$, and traversing to $L^{-1}(z+1, e_i+1, 1)$ in $P_{e_i+1}$.
	\end{itemize}
    \end{itemize}
     We observe that all paths in $\{Y_i : i\in I_5\}$ are pairwise vertex-disjoint.
    When  all integers in $(d_i : i\in I_5)$ are distinct, 
    each path $D_i$ is ending at a vertex of $C_{d_i}$, but does not meet other column of $W$. 
    So, all paths in $\{D_i : i\in I_5\}$ are pairwise vertex-disjoint and all paths in $\{Y_i : i\in I_5\}$ are pairwise vertex-disjoint.
    The case when all integers in $(d_i : i\in I_5)$ are the same is similar, 
    and  for the first and fourth subcases of the second case, we additionally use the fact that $y\notin (e_i:i\in I_5)$ (as modulo  $2w$) and $\abs{e_{i_1}-e_{i_2}}\ge 2 \pmod {2w}$ for all distinct $i_1, i_2\in I_5$.

   We apply Lemma~\ref{lem:expandinghalf} 
   for linkages $\{X_i : i\in I_5\}$, $\{Y_i : i\in I_5\}$, and a half-integral packing of parity-breaking paths $\{U_i : i\in I_5\}$.
   Since $\abs{I_5}=g_3(k)=8k$, by Lemma~\ref{lem:expandinghalf}, 
   there is either a half-integral packing of $k$ odd cycles, or
   a half-integral packing of parity-breaking paths $\mathcal{Z}=\{Z_i:i\in [k]\}$ such that 
   \begin{itemize}
    \item the first vertices of paths in $\mathcal{Z}$ are first vertices of paths in $\{X_i:i\in I_5\}$ and they are all distinct, and 
       \item the last vertices of paths in $\mathcal{Z}$ are last vertices of paths in $\{Y_i:i\in I_5\}$ and they are all distinct.
   \end{itemize}
   For each $i\in [k]$, let $s_i$ and $r_i$ be the first and last vertices of $Z_i$, respectively. Because paths in 
   $\{X_i, Y_i, U_i : i\in I_5\}$ do not use any edge of $W^*$, $\{s_i:i\in [k]\}$ and $\{r_i:i\in [k]\}$ cannot share a vertex.
   
   We construct a path $Z_i^*$ for each $i\in [k]$ in $\clos(W^*)$ so that $Z_i\cup Z_i^*$ is an odd cycle
   and $Z_i^*$ does not intersect $\bigcup_{j\in [k]} Z_j$ except the vertices in $\{r_i, s_i\}$.

\begin{figure}[t]
     \centering
     \begin{subfigure}[b]{0.48\textwidth}
         \centering
         \includegraphics[width=\textwidth]{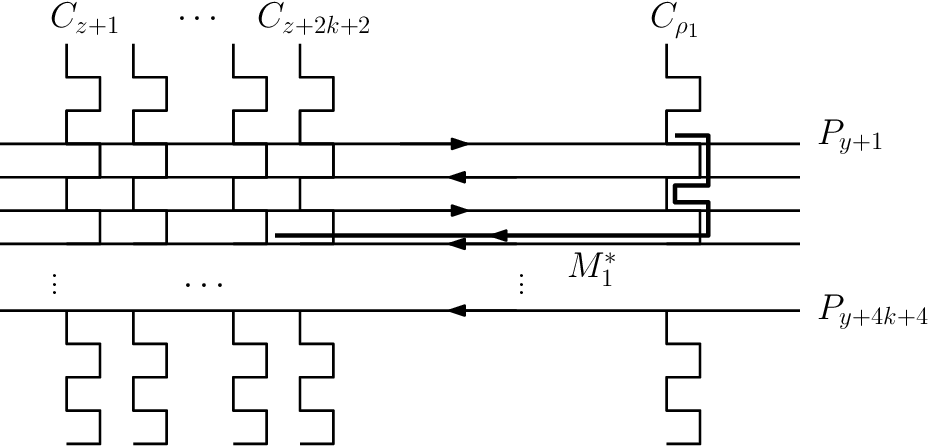}
         \caption{$r_1\in V(P_{y+1})$ and $\rho_1>z+2k+2$}
         \label{fig:listar1}
     \end{subfigure}
     \hfill
     \begin{subfigure}[b]{0.48\textwidth}
         \centering
         \includegraphics[width=\textwidth]{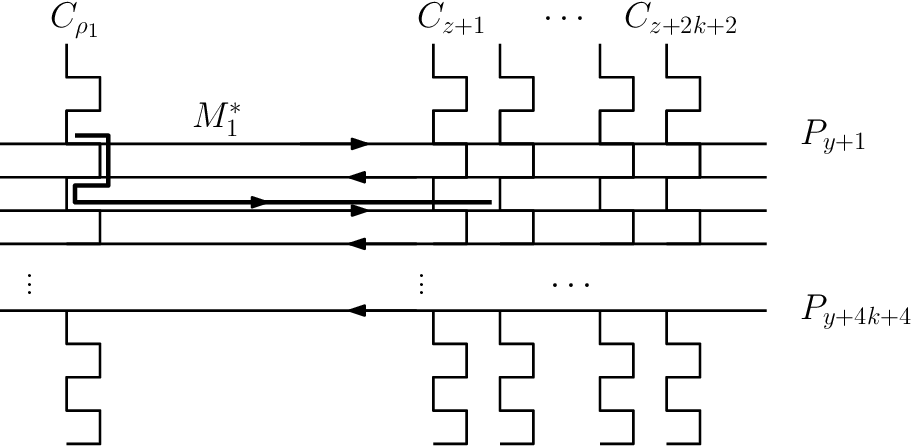}
         \caption{$r_1\in V(P_{y+1})$ and $\rho_1<z+1$}
         \label{fig:listar2}
     \end{subfigure} \hfill
      \vskip 0.3cm
         \begin{subfigure}[b]{0.48\textwidth}
         \centering
         \includegraphics[scale=0.6]{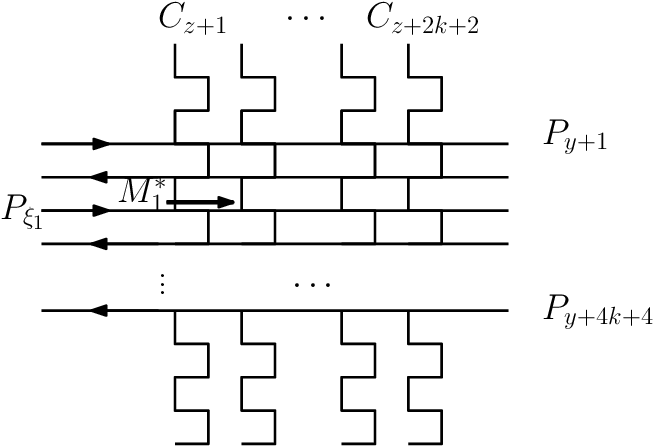}
         \caption{$r_1\in V(C_{z+1})$}
         \label{fig:listar3}
     \end{subfigure}\hfill
     \begin{subfigure}[b]{0.48\textwidth}
         \centering
         \includegraphics[scale=0.6]{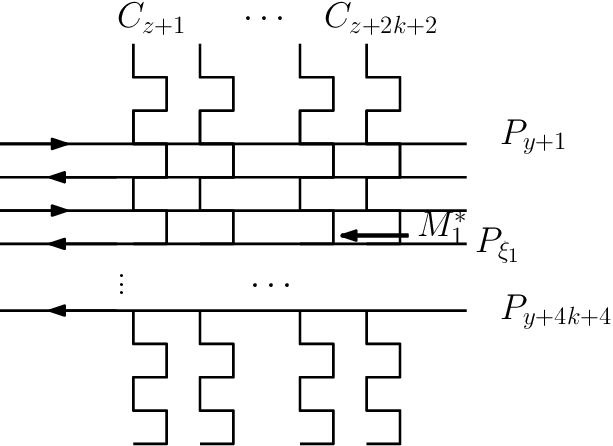}
         \caption{$r_1\in V(C_{z+2k+2})$}
         \label{fig:listar4}
     \end{subfigure}
        \caption{The construction of $M_1^*$.}
        \label{fig:listar}
\end{figure}

\begin{figure}[t]
     \centering
     \begin{subfigure}[b]{0.48\textwidth}
         \centering
         \includegraphics[width=\textwidth]{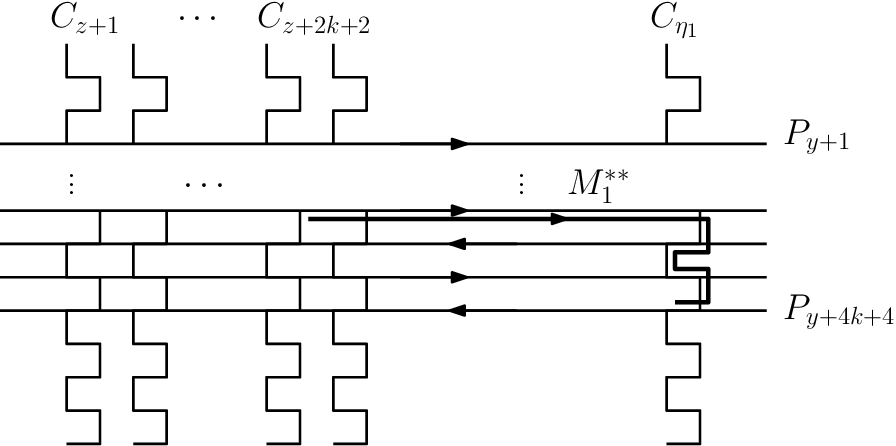}
         \caption{$s_1\in V(P_{y+4k+4})$ and $\eta_1>z+2k+2$}
         \label{fig:listarstar1}
     \end{subfigure}
     \hfill
     \begin{subfigure}[b]{0.48\textwidth}
         \centering
         \includegraphics[width=\textwidth]{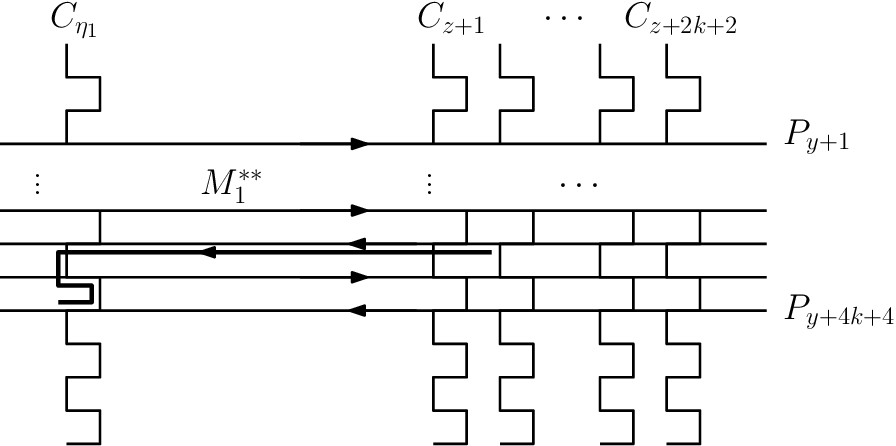}
         \caption{$s_1\in V(P_{y+4k+4})$ and $\eta_1<z+1$}
         \label{fig:listarstar2}
     \end{subfigure} \hfill
      \vskip 0.3cm
         \begin{subfigure}[b]{0.48\textwidth}
         \centering
         \includegraphics[scale=0.6]{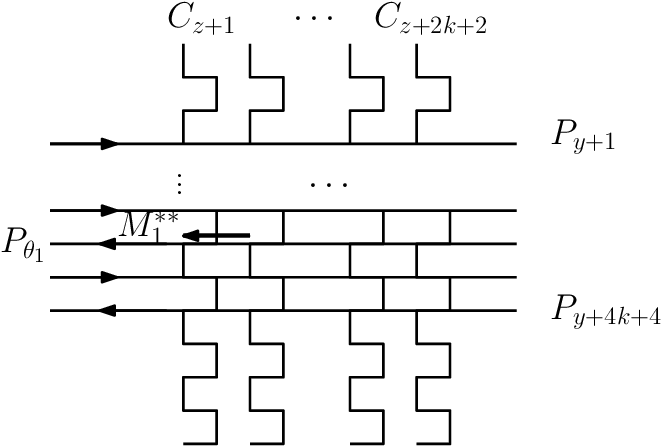}
         \caption{$s_1\in V(C_{z+1})$}
         \label{fig:listarstar3}
     \end{subfigure}\hfill
     \begin{subfigure}[b]{0.48\textwidth}
         \centering
         \includegraphics[scale=0.6]{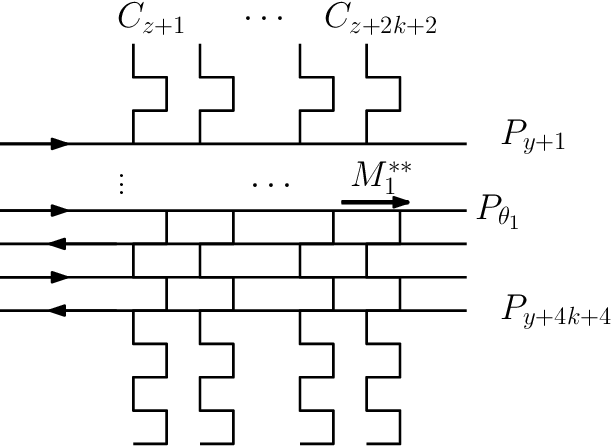}
         \caption{$s_1\in V(C_{z+2k+2})$}
         \label{fig:listarstar4}
     \end{subfigure}
        \caption{The construction of $M_1^{**}$.}
        \label{fig:listarstar}
\end{figure}

   \begin{itemize}
       \item Observe that $r_i$ is contained in $P_{y+1}\cup C_{z+1}\cup C_{z+2k+2}$.
       Let $C_{\rho_i}$ and $P_{\xi_i}$ be the column and row containing $r_i$ of $W$, respectively. See Figure~\ref{fig:listar} for illustrations.
       \begin{itemize}
           \item (Type 1. $r_i\in V(P_{y+1}).$)
           If $\rho_i>z+2k+2$, then let $M_i^*$ be the path starting at $r_i$, traversing to $L^{-1}(\rho_i, y+2i+2, 1)$ in $C_{\rho_i}$, and traversing to $L^{-1}(z+2k+2-i, y+2i+2, 1)$ in $P_{y+2i+2}$.
           If $\rho_i<z+1$, then let $M_i^*$ be the path starting at $r_i$, traversing to $L^{-1}(\rho_i, y+2i+1, 1)$ in $C_{\rho_i}$, and traversing to $L^{-1}(z+1+i, y+2i+1, 1)$ in $P_{y+2i+1}$.
           \item (Type 2. $r_i\in V(C_{z+1}).$)
            Let $M_i^*$ be the path starting at $r_i$ and traversing to $L^{-1}(z+1+i, \xi_i, 1)$ in $P_{\xi_i}$. 
            \item (Type 3. $r_i\in V(C_{z+2k+2})$.)
            Let $M_i^*$ be the path starting at $r_i$ and traversing to $L^{-1}(z+2k+2-i, \xi_i, 1)$ in $P_{\xi_i}$. 
       \end{itemize}
    \item The vertex $s_i$ is contained in $P_{y+4k+4}\cup C_{z+1}\cup C_{z+2k+2}$. Let $C_{\eta_i}$ and $P_{\theta_i}$ be the column and row containing $r_i$ of $W$, respectively. See Figure~\ref{fig:listarstar} for illustrations.
    \begin{itemize}
        \item (Type 1. $s_i\in V(P_{y+4k+4})$.) 
        If $\eta_i>z+2k+2$, then let $M_i^{**}$ be the path starting at $L^{-1}(z+2k+2-i, y+4k+3-2i, 2)$, traversing to $L^{-1}(\eta_i, y+4k+3-2i, 2)$ in $P_{y+4k+3-2i}$, and traversing to $L^{-1}(\eta_i, y+4k+4, 2)$ in $C_{\eta_i}$.
        If $\eta_i<z+1$, then let $M_i^{**}$ be the path starting at $L^{-1}(z+1+i, y+4k+4-2i, 2)$, traversing to $L^{-1}(\eta_i, y+4k+4-2i, 2)$ in $P_{y+4k+4-2i}$, and traversing to $L^{-1}(\eta_i, y+4k+4, 2)$ in $C_{\eta_i}$.
        \item (Type 2. $s_i\in V(C_{z+1}).)$ Let $M_i^{**}$ be the path starting at $L^{-1}(z+1+i, \theta_i, 2)$ and traversing to $L^{-1}(z+1, \theta_i, 2)$ in $P_{\theta_i}$. 
        \item (Type 3. $s_i\in V(C_{z+2k+2}).$) Let $M_i^{**}$ be the path starting at $L^{-1}(z+2k+2-i, \theta_i, 2)$ and traversing to $L^{-1}(z+2k+2, \theta_i, 2)$ in $P_{\theta_i}$.
   \end{itemize}
   \item Observe that the last vertex of $M_i^*$ and the first vertex of $M_i^{**}$ are contained in $C_{z+1+i}\cup C_{z+2k+2-i}$. Also, the subgraph $H_i$ obtained from $C_{z+1+i}\cup C_{z+2k+2-i}$ by adding the subpath of $P_{y+1+2i}$ from $C_{z+1+i}$ to $C_{z+2k+2-i}$ and the subpath of $P_{y+2+2i}$ from $C_{z+2k+2-i}$ to $C_{z+1+i}$ is strongly connected. 
   Let $M_i^{***}$ be a shortest path from the last vertex of $M^*$ to the first vertex of $M^{**}$ in $H_i$, and 
   let $Z_i^*:=M_i^{*}\cup M_i^{**}\cup M_i^{***}$. Clearly, $Z_i^*$ is a path from $r_i$ to $s_i$ in $\clos(W^*)$.
   \end{itemize}
   
 We claim that $\{Z_i^*:i\in [k]\}$ is a half-integral packing. 
 First observe that the set $\{M_i^*:i\in [k]\}$ is a half-integral packing. In fact, if $M_i^*$ intersects $M_j^*$ for some distinct $i,j\in [k]$, then  
 they are both paths of type 1, and either $\rho_i,\rho_j>z+2k+2$ or $\rho_i,\rho_j<z+1$. 
 But since  they traverse with pairwise distinct rows, no vertex can be shared by three paths in $\{M_i^*:i\in [k]\}$, 
 and furthermore, the possible intersection is not contained in the columns $C_{z+1}, \ldots, C_{z+2k+2}$.
 Similarly, the set $\{M_i^{**}:i\in [k]\}$ is a half-integral packing.
 Moreover, $\bigcup_{i\in [k]} M_i^*$ and $\bigcup_{i\in [k]} M_i^{**}$ are vertex-disjoint, because 
 we use rows $P_{y+3}, \ldots, P_{y+2k+2}$ for $M_i^*$ of type 1, 
 and rows $P_{y+2k+3}, \ldots, P_{y+4k+2}$ for $M_i^{**}$ of type 1, and 
 all paths of type 2 or 3 are pairwise vertex-disjoint ($r_i$ cannot be same as $s_j$ because of the directions).

 Thus, it is sufficient to consider nails contained in $C_{z+1}, \ldots, C_{z+2k+2}$, as paths in $\{M_i^{***}:i\in [k]\}$ do not use nails not contained in $C_{z+1}, \ldots, C_{z+2k+2}$.
 Suppose for contradiction that 
 there is a nail $v$ in $C_{z+1}, \ldots, C_{z+2k+2}$ 
 that is contained in some three paths in $\{M_i^*:i\in [k]\}\cup \{M_i^{**}:i\in [k]\}\cup \{M_i^{***}:i\in [k]\}$.
 Since paths in $\{M_i^*:i\in [k]\}\cup \{M_i^{**}:i\in [k]\}$ do not intersect on a nail in $C_{z+1}, \ldots, C_{z+2k+2}$, 
 $v$ is contained in two paths in $\{M_i^{***}:i\in [k]\}$, say $M_{i_1}^{***}$ and $M_{i_2}^{***}$.
 Since $\{H_i:i\in [k]\}$ is a half-integral packing, the other path should be a path in $\{M_i^*:i\in [k]\}\cup \{M_i^{**}:i\in [k]\}$.
 
 By the construction of $\{H_i:i\in [k]\}$, 
 $v$ is contained in one of the rows used by $M_{i_1}^{***}$ and $M_{i_2}^{***}$.
 But by the construction of $\{M_i^*:i\in [k]\}\cup \{M_i^{**}:i\in [k]\}$, 
 the other path should use the same row, and therefore, it has to have the same index as one of $i_1$ and $i_2$.
 Then the intersection vertex is contained in one path of $\{Z_i^*:i\in [k]\}$, contradicting the assumption that it is contained in three paths of $\{Z_i^*:i\in [k]\}$.
 We conclude that $\{Z_i^*:i\in [k]\}$ is a half-integral packing.

 Therefore, $\{Z_i\cup Z_i^*:i\in [k]\}$ is a half-integral packing of $k$ odd cycles, as required. 
\end{proof}

We now prove Theorem~\ref{thm:main}.
We recall that 
$\alpha_k$ is the minimum integer such that for every directed graph $G$ with $\nu_2(G)<k$, we have $\tau(G)\le \alpha_k$, if such an integer exists, and otherwise $\alpha_k$ is defined to be $\infty$.

\begin{proof}[Proof of Theorem~\ref{thm:main}]
	We prove by induction on $k$ that $\alpha_k\neq \infty$. We know $\alpha_1=0$. So, we may assume that $k>1$ and $\alpha_{k-1}\neq \infty$. 
	
	Let $f_{wall}$ be the function defined in Theorem~\ref{thm:KK2}, and let $g_{path}$ be the function defined in Proposition~\ref{prop:manywalks}. Let $r=2\alpha_{k-1}$. We set 
	\begin{itemize}
	    \item $f_3(k)=\max (k, r/4,  12(g_{path}(k)-1)+1)$,
	    \item $f_2(k)=\max ((2k+3)(6g_{path}(k)+1), 8f_3(k))$,
	    \item $f_1(k)=\max (r, f_{wall}(kf_2(k)))$,
	    \item $f(k)=\max (12f_1(k)(f_1(k)+1)+1, 24f_3(k)-4)$.
	\end{itemize}
	For convenience, let $w:=f_2(k)$.
	We show that for every directed graph $G$, if $\nu_2(G)<k$, then $\tau(G)\le f(k)$.
	
	Suppose for contradiction that $\nu_2(G)<k$ and $\tau(G)>f(k)$ for some directed graph $G$.
	Let $T$ be a minimum-size hitting set of odd cycles in $G$.
	By the assumption, $\abs{T}=\tau(G)>f(k).$
	Also, by Lemma~\ref{lem:welllinked}, 
	$T$ is $r$-externally-well-linked.

	Note that $2f_1(k)(f_1(k)+1)\ge r$ as $f_1(k)\ge r$.
	Since $\abs{T}>f(k)\ge 12f_1(k)(f_1(k)+1)+1$, 
	by Lemma~\ref{lem:largepathsystem}, 
	$G$ contains a well-linked set $A$ of size $f_1(k)$ such that
	\begin{itemize}
	    \item[($\ast$)] for every subset $Z$ of $T$ of size at least $\abs{T}/2$, 
	    there is a linkage of order $f_1(k)$ from $A$ to $Z$, and 
	    there is a linkage of order $f_1(k)$ from $Z$ to $A$.
	\end{itemize}

	Since $A$ is a well-linked set of size $f_1(k)\ge f_{wall}(kf_2(k))$, 
	by Theorem~\ref{thm:KK2}, 
	$G$ contains a cylindrical wall $W$ of order $kf_2(k)=kw$ such that for every set $F$ of $kw$ nails of $W$, 
	    there is a linkage of order $kw$ from $F$ to $A$, and 
	    there is a linkage of order $kw$ from $A$ to $F$.
     Let $C_1, \ldots, C_{kw}$ be the columns of $W$ and $P_1, \ldots, P_{2kw}$ be the rows of $W$.
	We consider the following $k$ vertex-disjoint subwalls of $W$.
    For each $j\in [k]$, 
    let $W_j$ be the subwall of $W$ consisting of columns $C_{w(j-1)+1}, \ldots, C_{wj}$ 
    and the minimal subpaths of rows $P_i$ with $i\in [2w]$ containing $C_{w(j-1)+1}\cap P_i$ and $C_{wj}\cap P_i$. 

    We claim that for each $j\in [k]$,
    \begin{itemize}
        \item[($\ast\ast$)] for every set $F$ of $w$ nails of $W_j$, 
	    there is a linkage of order $w$ from $F$ to $A$, and 
	    there is a linkage of order $w$ from $A$ to $F$.
    \end{itemize}
    Let $F$ be a set of $w$ nails of $W_j$. We choose a set $F'$ of $(k-1)w$ nails of $W$ that are not contained in $W_j$. We can choose such nails because there are $(k-1)w$ columns of $W$ that are not contained in $W_j$.
    By the property of $W$, 
    there is a linkage of order $kw$ from $F\cup F'$ to $A$, and there is a linkage of order $kw$ from $A$ to $F\cup F'$. 
    If we restrict paths whose endvertices are in $F$, then 
    we obtain a linkage of order $w$ from $F$ to $A$, 
    and a linkage of order $w$ from $A$ to $F$. Thus, the claim holds.

	If each of $W_1, \ldots, W_k$ contains an odd cycle, then we have $k$ vertex-disjoint odd cycles, contradicting the assumption that $\nu_2(G)<k$.
	Thus, one of $W_1, \ldots, W_k$, say $W'$, does not contain an odd cycle.

    Now, by $(\ast)$ and $(\ast\ast)$ and Lemma~\ref{lem:mintegral}, we have that 
	\begin{itemize}
	    \item[($\ast\ast\ast$)] for every subset $Z$ of $T$ of size at least $\abs{T}/2$ and every set $F$ of $w$ nails of $W$, 
	    there is a linkage of order at least $w/2$ from $F$ to $Z$, and 
	    there is a linkage of order at least $w/2$ from $Z$ to $F$.
	\end{itemize}
	Indeed, combining the linkage from $A$ to $Z$ and the linkage of order $w$ from $F$ to $A$, we obtain a half-integral linkage of order $w$ from $F$ to $Z$. Lemma~\ref{lem:mintegral} implies that there is a linkage of order at least $w/2$ from $F$ to $Z$. The other direction is similar.

	Since $W'$ has order $w$, $W'$ has $4w^2$ nails.
	Let $N$ be a set of $2w^2$ nails of $W'$ such that they are contained in the same part of the bipartition of $W'$.
	Now, we apply Lemma~\ref{lem:oddwalk2} for a tuple $(G, N, f_3(k))$.
	As $G$ has no half-integral packing of $k$ odd cycles and $f_3(k)\ge k$, $G$ contains either 
	\begin{itemize}
	    \item a half-integral packing $\mathcal{U}$ of $f_3(k)$ odd $N$-paths whose endvertices are pairwise disjoint, or
	    \item a set $Y$ of at most $4f_3(k)-1$ vertices such that $G-Y$ has no odd $N$-walks.
	\end{itemize} 
	
	Assume that the latter case happens. Observe that $f_2(k)\ge 8f_3(k)$, $4f_3(k)\ge r$, and $f(k)\ge 24f_3(k)-4$.
    We apply Proposition~\ref{prop:smallsep} with $(r,t,w)=(r, 4f_3(k), f_2(k))$. We can apply the proposition because of the property $(\ast\ast\ast)$.
    By Proposition~\ref{prop:smallsep}, 
	$G$ has a set of at most $3(4f_3(k)-1)$ vertices hitting all odd cycles. 
	It contradicts the fact that $\tau(G)>24f_3(k)-4\ge 12f_3(k)-3$.

    Thus, we may assume that the former case happens.
	Observe that $W'$ is a bipartite cylindrical wall of order \[w=f_2(k)\ge (2k+3)(6g_{path}(k)+1)\]
	and 
    $\mathcal{U}$ is a half-integral packing of
 \[f_3(k)\ge 12(g_{path}(k)-1)+1\] odd $N$-paths such that the endvertices of paths in $\mathcal{U}$ are disjoint.
	So, by Proposition~\ref{prop:manywalks}, 
	$\nu_2(G)\ge k$, a contradiction.
	
	We conclude that $\tau(G)\le f(k)$.

 \medskip
 (\textbf{Algorithmic part}.)
 	We now discuss how to turn this combinatorial result into a polynomial-time algorithm to find a half-integral packing of $k$ odd cycles or a hitting set of size at most $f(k)$, for fixed integer $k$.
	
	First by considering all sets $S$ of at most $f(k)$ vertices in $G$ and testing whether $G-S$ has no odd cycles, we can detect a hitting set of size at most $f(k)$ if one exists. Note that we can test in polynomial time whether a given directed graph has an odd cycle, as it is sufficient to test whether the underlying undirected graph of each strong component is bipartite. Therefore, we may assume that $G$ has no hitting set of odd cycles of size at most $f(k)$, that is, $\tau(G)>f(k)$. So, we want to find a half-integral packing of $k$ odd cycles.

	Note that we cannot guess the set $T$, as $\tau(G)$ may be much larger than $k$ (and must contain a half-integral packing of $k$ odd cycles). On the other hand, as $\tau(G)>f(k)$, there should be a well-linked set of size $f_1(k)$ as in the proof.
	We consider all sets $A$ of size $f_1(k)$ and test whether it is well-linked. As $k$ is a fixed integer, we can test in polynomial time  whether $A$ is well-linked, by repeatedly applying  Menger's theorem. 
	We construct the set 
	\[\mathcal{M}:=\{A\subseteq V(G): \abs{A}=f_1(k), A \text{ is well-linked in }G\}.\]
	
	As $f_1(k)\ge f_{wall}(kf_2(k))$, for each $A\in \mathcal{M}$, by applying Theorem~\ref{thm:KK2}, we obtain a cylindrical wall $W_A$ of order $kf_2(k)=kw$ such that 
	\begin{itemize}
	    \item for every set $F$ of $kw$ nails of $W_A$, 
	    there is a linkage of order $kw$ from $F$ to $A$, and 
	    there is a linkage of order $kw$ from $A$ to $F$.
	\end{itemize} Note that it runs in polynomial time for fixed $k$, and Campos et al.~\cite{Campos2020} recently discussed how to modify this into an FPT algorithm.
	By dividing $W_A$ into $k$ subwalls as in the proof, 
	we find either a half-integral packing of $k$ odd cycles or a bipartite cylindrical subwall $W_A'$ of order $w$ such that 
	 \begin{itemize}
        \item for every set $F$ of $w$ nails of $W_A'$, 
	    there is a linkage of order $w$ from $F$ to $A$, and 
	    there is a linkage of order $w$ from $A$ to $F$.
    \end{itemize}
    
    We choose a set $N_A$ of $w^2$ nails in $W_A'$ contained in the same part of the bipartition of $W_A'$.
    We apply Lemma~\ref{lem:oddwalk2} for the tuple $(G, N_A, f_3(k))$. Clearly, Lemma~\ref{lem:oddwalk2} can be simulated in polynomial time, as we only use Menger's theorem. If it outputs a half-integral packing of $k$ odd cycles, then we are done. So, we may assume that it outputs either 
		\begin{itemize}
	    \item a half-integral packing $\mathcal{U}_A$ of $f_3(k)$ odd $N_A$-paths whose endvertices are pairwise disjoint, or
	    \item a set $Y_A$ of at most $4f_3(k)-1$ vertices such that $G-Y_A$ has no odd $N_A$-walks.
	\end{itemize} 
	If this outputs $Y_A$ and the current set $A$ and the set $T$ satisfy the property $(\ast)$, then there is a hitting set of size at most $12f_3(k)-3\le f(k)$, which contradicts the assumption that $\tau(G)>f(k)$. 
	So, if the second outcome occurs, then it means that the current $A$ and $T$ do not satisfy $(\ast)$, and we skip this $A$.
	If it outputs $\mathcal{U}_A$, then following the proof of Proposition~\ref{prop:manywalks} we can obtain a half-integral packing of $k$ odd cycles in polynomial time.

	Since there exists a set $A\in \mathcal{M}$ satisfying $(\ast)$, by considering all sets $A$ in $\mathcal{M}$, we will either output a hitting set of size at most $f(k)$ or a half-integral packing of $k$ odd cycles. This concludes the algorithm.
	\end{proof}

\section{Discussion}\label{sec:discussion}

In this paper, we proved that a half-integral Erd\H{o}s-P\'osa theorem holds for directed odd cycles. We would like to ask whether a half-integral Erd\H{o}s-P\'osa theorem  holds for directed cycles of length $\ell$ modulo $m$ for other pairs of integers $\ell$ and $m$. Gollin et al.~\cite{GollinHKKO2021} proved that a half-integral Erd\H{o}s-P\'osa theorem holds for undirected cycles of length $\ell$ modulo $m$ for any pair of integers $\ell$ and $m\ge 2$.

\begin{question}\label{question1}
    For every pair of integers $\ell$ and $m\ge 2$, does there exist a function $f_{(\ell, m)}:\mathbb{N}\to \mathbb{R}$ satisfying that for every directed graph $G$ and every positive integer $k$, $G$ contains a half-integral packing of $k$ directed cycles of length $\ell$ modulo $m$, or 
a set of at most $f_{(\ell, m)}(k)$ vertices meeting all directed cycles of length $\ell$ modulo $m$? 
\end{question}

If Question~\ref{question1} is true for some pair $(\ell, m)$, then we further ask whether such a function still exists when we replace `a half-integral packing' with `an integral packing'.
Gollin et al.~\cite{GollinHKOY2022} characterized pairs of integers $(\ell, m)$ where an analogue of the Erd\H{o}s-P\'osa theorem holds for undirected cycles of length $\ell$ modulo $m$.

Gorsky, Kawarabayashi, Kreutzer, and Wiederrecht~\cite{GorskyKKW2024} proved that an analogue of the Erd\H{o}s-P\'osa theorem does not hold for directed even cycles, and a $1/4$-integral analogue of the Erd\H{o}s-P\'osa theorem holds for directed even cycles. They specifically asked whether a half-integral analogue of the Erd\H{o}s-P\'osa theorem holds for directed even cycles.

The function $f$ in Theorem~\ref{thm:main} is probably far from being optimal. We do not know any non-trivial lower bound for $f$ (other than $f(k)= \Omega(k)$). As far as we know, this is same for the function $g$ in Theorem~\ref{thm:Reed}. The original function $g$ in Theorem~\ref{thm:Reed} was exponential in $k$, but together with the polynomial grid theorem by Chuzhoy and Tan~\cite{ChuzhoyTan2021} and a result of Kawarabayashi, Thomas, and Wollan~\cite[Lemma 14.6]{KawaTW2020}, one can obtain a polynomial function $g$ for Theorem~\ref{thm:Reed}. We ask whether there is a polynomial function $f$ for Theorem~\ref{thm:main}. Also, finding non-trivial lower bounds for~$f$ and~$g$ would be interesting problems.

\begin{question}
Is there a polynomial function $f:\mathbb{N}\rightarrow \mathbb{R}$ such that for every directed graph $G$ and every positive integer $k$, 
$G$ contains a half-integral packing of $k$ directed odd cycles, or 
a set of at most $f(k)$ vertices meeting all directed odd cycles? 
\end{question}

\subsection*{Acknowledgements}
The authors would like to thank the anonymous referees for the careful reading of the manuscript and numerous suggestions that helped to improve the presentation.

    \end{document}